\newtheorem{theorem}{Theorem}
\newtheorem{lemma}{Lemma}
\newtheorem{proposition}{Proposition}
\newtheorem{remark}{Remark}
\newcommand{\calO}{\mathcal{O}}
\newcommand{\bbB}{\mathbb{B}}
\newcommand{\bbP}{\mathbb{P}}
\newcommand{\bbQ}{\mathbb{Q}}
\newcommand{\bbZ}{\mathbb{Z}}
\newcommand{\bfc}{\mathbf{c}}
\newcommand{\bfe}{\mathbf{e}}
\newcommand{\bfr}{\mathbf{r}}
\newcommand{\Aut}{\textup{Aut}}
\newcommand{\Pic}{\textup{Pic}}
\newcommand{\rk}{\textup{rk}}
\newcommand{\SU}{\textup{SU}}
\newcommand{\NS}{\textup{NS}}
\newcommand{\Tr}{\textup{Tr}}
\renewcommand\emptyset\varnothing
\newcommand{\beq}{\begin{equation}}
\newcommand{\eeq}{\end{equation}}
\date{\today}
\title{The bicanonical map of the Cartwright-Steger surface}
\author{JongHae Keum}
\begin{document}

\maketitle
\begin{abstract} We prove that the bicanonical map of the Cartwright-Steger surface is an embedding. We also discuss two minimal surfaces of general type, both covered by the Cartwright-Steger surface. One has $K^2=2, p_g=1, \pi_1=\{1\}$ and the other has $K^2=1, p_g=0, \pi_1=\bbZ/2\bbZ$.
\end{abstract}

The Cartwright-Steger surface is a minimal surface of general type with $p_g=q=1$ and $K^2=3c_2=9$. It is an arithmetic ball quotient found by Cartwright and Steger \cite{CS}, \cite{CS2}. Reider's theorem \cite{reider} implies that the bicanonical system of a ball quotient $X$ is base point free, thus defines a morphism.

Let $X$ be the Cartwright-Steger surface and
$$\Phi_{2,X} : X \to {\bbP}^9$$ be the bicanonical map. This map may fail to separate points only on certain curves, as specified in  the criterion of Reider's theorem \cite{reider}. In this note we prove that such curves do exist on the Cartwright-Steger surface.

\begin{theorem}\label{Main} On the Cartwright-Steger surface $X$ there exists no effective curve $B$ such that either $3B$ is numerically equivalent to the canonical class $K_X$ or $K_XB=2$ and $B^2=0$. In particular, the bicanonical map $\Phi_{2,X}$ is an embedding into ${\bbP}^9$.
\end{theorem}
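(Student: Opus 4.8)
The plan is to treat the embedding conclusion as a formal consequence of the two non-existence statements and to concentrate on the latter. Applying Reider's theorem \cite{reider} with the ample divisor $L = K_X$ (so $L^2 = K_X^2 = 9$), any failure of $\Phi_{2,X}$ to separate two points or a tangent vector produces an effective curve $B$ of one of the two listed numerical types; excluding both types therefore makes $\Phi_{2,X}$ very ample, hence an embedding, and the target is $\bbP^9$ since $h^0(2K_X) = \chi(2K_X) = 1 + K_X^2 = 10$. Throughout I would use the ball-quotient geometry of $X$: $K_X$ is ample, so $K_X \cdot C \ge 1$ for every irreducible curve $C$; $K_X^2 = 9$ and $\chi(\calO_X)=1$; and $q=1$ yields an Albanese fibration $a\colon X \to E$ onto an elliptic curve, whose general fibre $F$ is an effective class with $F^2 = 0$ and $K_X F = 2g-2$, where $g = g(F) \ge 2$ since $K_X$ is ample. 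As $F$ is not numerically trivial, it is not proportional to $K_X$, so $\rk \NS(X) \ge 2$.

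\emph{Case $3B \equiv K_X$.} Here $[K_X] = 3[B]$ in $\Num(X)$, which forces $B^2 = 1$ and $K_X B = 3$; such a $B$ can exist only if $[K_X]$ is divisible by $3$ in $\Num(X)$. I would rule this out purely numerically, by showing that $K_X$ is primitive in $\Num(X)$ — reading this off the explicit intersection form of $\NS(X)$ for the Cartwright--Steger surface, supplemented if needed by the action of the order-$3$ group $\Aut(X)$, which fixes $K_X$. Effectivity of $B$ is never used: non-divisibility alone kills the class.

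\emph{Case $K_X B = 2,\ B^2 = 0$.} Decompose $B = \sum a_i C_i$ into irreducibles; since each $K_X C_i \ge 1$ and $K_X B = 2$, either (a) $B$ is irreducible with $K_X B = 2$ (so $p_a(B)=2$ by adjunction), or (b) $B$ is supported on curves $C$ with $K_X C = 1$. I would eliminate (b) by showing $X$ carries no curve of canonical degree $1$ — a lower bound on $K_X\cdot C$ that should follow either from the parity and divisibility of the intersection form on $\NS(X)$ or from the logarithmic Bogomolov--Miyaoka--Yau constraints available on a ball quotient. For (a), $[B]$ is an effective isotropic class with $K_X B = 2$, and I would show $\NS(X)$ contains no such class. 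The delicate point is the Albanese fibre $F$: it is precisely an effective isotropic class, realizing type (a) exactly when $g=2$. Hence I must verify that the Albanese fibration has fibre genus $g \ge 3$ (equivalently, $X$ admits no genus-$2$ pencil), computing $g$ from the fibration invariants $\deg a_*\omega_{X/E} = 1$, $K_{X/E}^2 = 9$, $c_2 = 3$ together with the explicit curve data.

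The hard part is not the case analysis but the arithmetic--geometric input it rests on: one must know the Néron--Severi lattice of the Cartwright--Steger surface precisely enough to certify simultaneously that $K_X$ is not $3$-divisible, that no effective class has $B^2 = 0$ and $K_X B = 2$, and that no curve has $K_X C = 1$ — in particular that the Albanese fibre genus is at least $3$. Assembling this lattice and fibration data, from the arithmetic of the defining lattice, the $\Aut(X)$-action, and the structure of $a\colon X\to E$, is the technical core; granted it, each of the two cases collapses to a short numerical check and the embedding statement follows from Reider's theorem.
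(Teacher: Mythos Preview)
Your reduction via Reider's theorem and the case split are correct, and your argument that an effective $B$ with $K_XB=2$, $B^2=0$ must be a smooth irreducible genus-$2$ curve matches the paper's Lemma~\ref{smoothD}. The observation about the Albanese fibre is also on target: the paper records $g(F)=19$, so $F$ itself is not a counterexample.

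The genuine gap is in your plan for the case $K_XB=2$, $B^2=0$. You propose to show that $\NS(X)$ contains no isotropic class of canonical degree $2$, to be read off from the intersection form. The paper carries out exactly this lattice computation (Steps~I--III of Section~3) and finds that the intersection form does \emph{not} exclude such a class: up to a harmless relabelling there is a unique numerical candidate
\[
D_I \;=\; \tfrac{1}{9}(2E_3 + C_1 - C_2),
\]
and nothing in the arithmetic of the known sublattice $\langle E_1,E_3,C_1\rangle$ (determinant $18^2$) prevents $D_I$ from being integral. So your intended ``short numerical check'' cannot close this case. The paper's actual contradiction (Section~6) requires passing to the minimal resolution $Y$ of $X/\Aut(X)$ and invoking the \emph{Borisov involution} $\alpha$ on $Y$ --- an automorphism discovered by computer from explicit equations, not coming from the ball --- to compute $(\nu^*p_*B)\cdot(\alpha\,\nu^*p_*B)=4/3$, which is not an integer. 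This extra automorphism is the decisive ingredient, and your proposal has no analogue of it.

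A smaller issue affects the case $3B\equiv K_X$. You want to read off primitivity of $K_X$ in $\Num(X)$ from the intersection form, but every known geodesic curve meets $K_X\sim E_3$ in a multiple of $9$, so the pairing alone does not obstruct $K_X/3\in\NS(X)$; one would need the full lattice $\NS(X)$, which the paper does not determine. The paper instead argues that $3$-divisibility of $K_X$ in $\Pic(X)$ (automatic here since $\Pic^0(X)$ is divisible) is equivalent to liftability of $\pi_1(X)$ to $\SU(2,1)$, and cites the Cartwright--Steger computation that this lift does not exist.
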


  It is known that Cartwright-Steger surface $X$ has automorphism group $\Aut(X)=\bbZ/3\bbZ$ and the quotient is simply connected \cite{CS2}, and the action has only isolated fixed points, three of type $1/3(1,1)$ and six of type $1/3(1,2)$  \cite{CKY}. The latter was also obtained by geometric arguments by F. Catanese, T. Domingo, M. Stover and the author, and by I. Dolgachev. It follows that the minimal resolution $Y$ of the quotient $X/\Aut(X)$ is a minimal surface of general type with
  $$K_Y^2=2,\,\,\, p_g(Y):=h^{2,0}(Y)=1,\,\,\, \pi_1(Y)=\{1\}.$$

  L. Borisov found an involution $\alpha$ on $Y$, as a bi-product of his computation of the equation of $Y$. This automorphism does not come from the ball. It turns out to be fixed point free and the quotient $Z=Y/\langle \alpha\rangle$ is a minimal surface of general type with $$K_Z^2=1,\,\,\, p_g(Z)=0,\,\,\, \pi_1(Z)=\bbZ/2\bbZ.$$

\section{Known facts on the  Cartwright-Steger surface}

We collect known facts on the  Cartwright-Steger surface \cite{CS}, \cite{CS2}, \cite{CKY}, \cite{DM}, \cite{D}.
Let
$$\pi:X = \bbB/\Pi\to \bbP(1,2,3) = \bbB/\bar{\Gamma}$$
be the natural projection map of degree $[\bar{\Gamma}:\Pi] = 3.288.$ The Deligne-Mostow quotient $\bbP(1,2,3) = \bbB/\bar{\Gamma}$ contains a line $D_B$ and a cuspidal curve $D_A$. The curve $D_B$ (resp.$D_A$) is the image of the set of mirrors of complex reflections of order 4 (resp. 3) in $\bar{\Gamma}$. See p.111, \cite{DM}. Let $P_1$ be the cusp of $D_A$,  $P_2$ the tangential intersection point of $D_A$ and $D_B$, $P_3$ the transversal intersection point of $D_A$ and $D_B$, and $P_4, P_5$ be the singular points of type $1/2(1,1)$ and $1/3(1,2)$ respectively. We know that $D_A\cap D_B=\{P_2, P_3\}$, $P_4\in D_B$ and $P_5\notin D_A\cup D_B$. The pre-image $\pi^{-1}(P_2)$  consists of three points $O_1,O_2,O_3$. From  \cite{D} we also know that
$$\pi^{-1}(D_B) = E_1+E_2+E_3,$$
where $E_1,E_2,E_3$ are irreducible curves of geometric genus 4. As a reducible curve, $\pi^{-1}(D_B)$ has 6 transversal branches at each $O_i$, and is smooth elsewhere.  Thus $E_1,E_2,E_3$ intersect each other only at $O_1,O_2,O_3$. Their multiplicities at $O_i$ are given in Table \ref{multEC}. The intersection number $E_iE_j$ is equal to the dot product of their multiplicity vectors if $i\neq j,$ and \beq\label{E2} E_j^2=1-g(E_j)+\sum_{i} m(E_j, O_i)[m(E_j, O_i)-1].\eeq
 See Table \ref{intCS}. Here we use the fact that $K_XE=3g(E)-3$  for a totally geodesic curve $E$ of geometric genus $g(E)$.

\begin{table}[ht]
\centering
  \begin{tabular}{|c|c|c|c|c|c|}\hline
&$O_1$&$O_2$&$O_3$&$g$&$p_a$  \\ \hline\hline
$E_1$&3&1&2&4&8\\
$E_2$&2&1&3&4&8\\
$E_3$&1&4&1&4&10\\
$C_1,C_2$&0&1&2&4&5\\
$C_3,C_4$&4&3&2&10&\\ \hline \hline
\end{tabular}
\caption{}\label{multEC}
\end{table}


There are two curves $C_1,C_2$ of geometric genus 4, and two curves $C_3,C_4$ of geometric genus 10 such that
$$\pi^{-1}(D_A) = C_1+C_2+C_3+C_4.$$
As a reducible curve, $\pi^{-1}(D_A)$ has 8 transversal branches at each $O_i$.
The preimage $\pi^{-1}(P_1)$ consists of 36 points and $\pi^{-1}(P_3)$ 72 points. The curves $C_i$ intersect each other at $O_i$, but also intersect transversally at points in $\pi^{-1}(P_1)$, and nowhere else. Table \ref{multEC} contains the multiplicities  at $O_1,O_2,O_3$ of the curves $C_i$.

The information on the curves $E_i, C_j$ are obtained from \cite{DM} and \cite{D}.

\subsection{Intersection numbers of the geodesic curves}
Through discussion with F. Catanese, M. Stover, D. Toledo, we have obtained the intersection numbers of curves $E_i,C_j$, as given in Table \ref{intCS}.
 This table confirms the computation of \cite{CKY}, where the entries involving $C_3, C_4$ are not given explicitly.

\begin{table}[ht]
\centering
  \begin{tabular}{|c|c|c|c|c|c|c|l|}\hline
&$E_1$&$E_2$&$E_3$&$C_1$&$C_2$&$C_3$&$C_4$  \\ \hline\hline
$E_1$&5&13&9&11&11&25&25\\
$E_2$&13&5&9&7&7&29&29\\
$E_3$&9&9&9&9&9&27&27\\
$C_1$&11&7&9&$-1$&17&37 &19 \\
$C_2$&11&7&9&17&$-1$&19 &37 \\
$C_3$&25&29&27&37&19 &71&89 \\
$C_4$&25&29&27& 19&37 &89 &71
\\\hline \hline
\end{tabular}
\caption{}\label{intCS}
\end{table}
In particular $p_a(C_3)=p_a(C_4)=50,$ filling up Table \ref{multEC}.

\subsection{The N\'eron -Severi group $\NS(X)$}
 By Cartwright and Steger \cite{CS}, $$H_1(X, \mathbb{Z})=\mathbb{Z}^2.$$  So by the universal coefficient theorem, $H^2(X, \mathbb{Z})=\mathbb{Z}^5$, torsion free. The three curves, $E_1, E_3, C_1$, are  numerically independent, so $\NS(X)$ is a free group of rank 3. Note that $Pic(X)$  contains torsions, namely the torsion elements of the Picard variety $Pic^0(X)$, an elliptic curve in this case.

\subsection{Fix$(\sigma)$} It is known that $\Aut(X)\cong \bbZ/3\bbZ$ \cite{CS2}.
The order 3 automorphism $\sigma$ of $X$ fixes 9 points,
\beq
Fix(\sigma)=\{O_1, O_2, O_3, Q_1,\ldots, Q_6\},
\eeq
where
$O_i$ is of type $1/3(1,1)$ and $Q_i$ of type  $1/3(1,2)$. The points $Q_1, ...,Q_6$ lie over the the singular point of type  $1/3(1,2)$ of the Deligne-Mostow quotient $\bbP(1,2,3)$, hence for any $i$
\beq
Q_i\notin \pi^{-1}(D_B)\cup\pi^{-1}(D_A).
\eeq  The induced action of $\sigma$ on the elliptic curve $Alb(X)$ fixes 3 points. Let $F_0, F_1, F_2$ be the 3 $\sigma$-invariant Albanese fibres. One may assume that
\beq
O_1, O_2, O_3\in F_0, \,\,\,Q_1, Q_2, Q_3\in F_1,\,\,\,Q_4, Q_5, Q_6\in F_2.
\eeq

\subsection{The action of $\sigma$ on $\pi^{-1}(D_B)$ and $\pi^{-1}(D_A)$}

\begin{itemize}
\item Each $E_i, C_j$ is  $\sigma$-invariant, i.e. $\sigma(E_i)=E_i, \,\,i=1,2,3.$ and $\sigma(C_j)=C_j, \,\,j=1,2,3,4.$
\end{itemize}

This follows from the fact that both $\pi^{-1}(D_A)$ and  $\pi^{-1}(D_B)$ have transversal branches at $O_i$.
Indeed, locally at $O_i$, $\sigma(x, y)=(\zeta x,\zeta y)$, $\zeta$ is a third root of 1, hence preserves the tangent line of every branch.

\begin{proposition} $\Aut(X)$ is cohomologically trivial, i.e., acts as the identity on $H^2(X, \mathbb{Z})=\mathbb{Z}^5$.
\end{proposition}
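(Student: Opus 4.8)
The plan is to derive the triviality of the $\Aut(X)$-action on $H^2(X,\bbZ)$ from the topological Lefschetz fixed point formula applied to a generator $\sigma$ of $\Aut(X)\cong\bbZ/3\bbZ$. Since $\sigma$ is holomorphic it preserves the orientation, and by the data recorded above its nine fixed points $O_1,O_2,O_3,Q_1,\dots,Q_6$ are isolated and nondegenerate: the local eigenvalues of $d\sigma_p$ are nontrivial cube roots of unity, so no tangent vector is fixed. Consequently the Lefschetz index at each fixed point is $+1$ (the real Jacobian of $1-d\sigma_p$ equals $|\det_{\CC}(1-d\sigma_p)|^2>0$), and the formula reads $\sum_{i=0}^4(-1)^i\Tr(\sigma^*\mid H^i(X,\QQ))=\#\Fix(\sigma)=9$.

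First I would record the traces in degrees $\neq 2$. On $H^0$ and $H^4$ the action is trivial (the latter because $\sigma$ is orientation preserving), so $\Tr(\sigma^*\mid H^0)=\Tr(\sigma^*\mid H^4)=1$. Because $\sigma$ induces on $\mathrm{Alb}(X)$ an order-$3$ automorphism with three fixed points, it is not a translation, hence acts nontrivially on $H^{1,0}$; the eigenvalue there is a primitive cube root of unity $\zeta$, and since $\sigma^*$ is a real operator the eigenvalue on $H^{0,1}=\overline{H^{1,0}}$ is $\overline{\zeta}$. Thus $\Tr(\sigma^*\mid H^1(X,\QQ))=\zeta+\overline{\zeta}=-1$, and by Poincar\'e duality (again using orientation preservation) $\Tr(\sigma^*\mid H^3(X,\QQ))=-1$ as well.

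Substituting into the Lefschetz identity gives $1-(-1)+\Tr(\sigma^*\mid H^2)-(-1)+1=9$, so $\Tr(\sigma^*\mid H^2(X,\QQ))=5=b_2(X)$. As $\sigma^*$ has order dividing $3$ it is diagonalizable with cube-root-of-unity eigenvalues, each of real part at most $1$; a trace equal to the rank forces all five eigenvalues to be $1$, whence $\sigma^*=\mathrm{id}$ on $H^2(X,\QQ)$, and therefore on $H^2(X,\bbZ)=\bbZ^5$ since the latter is torsion free. As a consistency check the restriction to $\NS(X)$ is already visibly trivial, since $\sigma$ preserves each $E_i$ and $C_j$ and the classes $E_1,E_3,C_1$ span $\NS(X)\otimes\QQ$.

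I expect no serious obstruction beyond bookkeeping; the only delicate points are pinning down the odd-degree traces, which rests on the nontriviality of the induced Albanese action and on Poincar\'e duality in degree three, and confirming that each isolated holomorphic fixed point has index exactly $+1$. As an independent confirmation one can run the holomorphic Lefschetz fixed point formula $\sum_q(-1)^q\Tr(\sigma^*\mid H^q(X,\OOO_X))=\sum_{p}\det_{\CC}(1-d\sigma_p)^{-1}$: the three $\tfrac{1}{3}(1,1)$-points contribute $1+\zeta$ and the six $\tfrac{1}{3}(1,2)$-points contribute $2$, giving $3+\zeta$, and with $\Tr(\sigma^*\mid H^1(\OOO_X))=\overline{\zeta}=\zeta^2$ this forces $\Tr(\sigma^*\mid H^2(X,\OOO_X))=1$, i.e. the trivial action on $H^{2,0}$, in agreement with the computation above.
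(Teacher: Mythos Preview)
Your argument is correct and complete. The paper takes a somewhat different route: instead of computing the full trace on $H^2$ via the topological Lefschetz formula, it splits $H^2(X,\bbQ)$ into the algebraic piece $\NS(X)\otimes\bbQ$ and the transcendental piece $H^{2,0}\oplus H^{0,2}$, notes directly that $\sigma$ fixes the spanning curves $E_1,E_3,C_1$ (hence acts trivially on $\NS$), and then applies the holomorphic Lefschetz fixed point formula to the nine fixed points to conclude that $\sigma^*$ is the identity on $H^2(X,\OOO_X)$ (obtaining the nontrivial action on $H^1(X,\OOO_X)$ as a by-product). Your approach is more unified in that it handles $H^2$ in one stroke and does not need the Hodge decomposition; the price is that you must first determine $\Tr(\sigma^*\mid H^1)$ from the Albanese geometry, whereas the paper extracts this from the same holomorphic Lefschetz computation. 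In effect the two proofs are dual: you use the topological Lefschetz formula as the main engine and the holomorphic one as a consistency check, while the paper does the reverse.
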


\begin{proof} Since the classes of the three curves, $E_1, E_3, C_1$, generate $\NS(X)\otimes \bbQ$,  $\Aut(X)$ acts as the identity on $\NS(X)\otimes \bbQ$. By the holomorphic Lefschetz fixed point formula (cf. \cite{K12}), from the information on the fixed locus of $\Aut(X)$ one sees that  $\sigma^*$ acts on $H^2(X, \calO_X)$ as the identity, and on $H^1(X, \calO_X)$ as the multiplication by a third root of unity.
It follows that $\Aut(X)$ acts as the identity on $H^2(X, \mathbb{Q})$, hence on $H^2(X, \mathbb{Z})$ since the latter has no torsion element.
\end{proof}

\begin{remark} $\sigma^*$ acts on $H^1(X, \bbZ)=\bbZ^2$ nontrivially, namely as $\sigma^*(v_1)=v_2$, $\sigma^*(v_2)=-v_1-v_2$ with respect to a suitable basis $v_1, v_2$.
Thus in the theorem of \cite{peters} the condition that $|K_X|$ is base point free  is necessary.
\end{remark}

\subsection{Linear equivalences among $\Aut(X)$-invariant curves}
Since the quotient $X/\Aut(X)$ is simply connected \cite{CS2}, any numerical equivalence among $\Aut(X)$-invariant curves is indeed a linear equivalence modulo an $\Aut(X)$-invariant divisor class $\in \Pic^0(X)$ (note that the $\Aut(X)$-action on $\Pic^0(X)$ fixes 3 elements, that form a subgroup of order 3.) Modulo $\Pic^0(X)^{\Aut(X)}$,
$$K_X\equiv E_3,$$
$$E_1+E_2\equiv 2E_3,$$
$$4E_3\equiv C_1+C_3\equiv C_2+C_4,$$
$$3E_3\equiv E_1+C_1+C_2,$$
$$E_2+E_3\equiv C_1+C_2.$$
 In particular, each of the above equivalences is a linear equivalence, once multiplied by 3, e.g., $3(E_1+E_2)\equiv 6E_3.$

\begin{remark} It is not clear if the above equivalences are indeed  linear equivalences, without being  multiplied by 3. (Remark 5.7 in \cite{CKY} needs proof or corrections. On the simply connected quotient
$X/\Aut(X)$  a numerical equivalence between Cartier divisors is a linear equivalence, so is its pull back to $X$, but this may not hold for $\bbQ$-Cartier divisors.) In general, if a compact complex surface $V$ admits a $\bbZ_m$-action with only isolated fixed points such that $V/\bbZ_m$ is simply connected, and if two $\bbZ_m$-invariant effective
curves $A$ and $B$ on $V$ are numerically equivalent, then $mA$ and $mB$ are linearly equivalent. But $A$ and $B$ may not be linearly equivalent, as there are examples, e.g., consider a product of two elliptic curves and its Kummer surface.
\end{remark}

\subsection{The Albanese map} Every fibre of the Albanese map $\alpha :X\to Alb(X)$ is irreducible and reduced. Let $F$ be a general smooth fibre. By \cite{CKY} $$g(F)=19$$ and $F$ is numerically equivalent to $-E_1+5E_2$.

\section{First step of Proof}

First we recall Reider's theorem \cite{reider} by stating the expanded version given in Theorem 11.4 of \cite{bpv}.

\begin{theorem}\label{reider} \cite{reider} Let L be nef divisor on a smooth projective surface $X$.
\begin{enumerate}
\item Assume that $L^2\ge 5$. If $P$ is a base point of the linear system $|K_X+L|$, then $P$ lies on an effective divisor $B$ such that
\begin{enumerate}
\item $BL=0,\,\,\,\, B^2=-1$, or
\item $BL=1,\,\,\,\, B^2=0$.
\end{enumerate}
\item
Assume that $L^2\ge 9$. If $P$ and $Q$, possibly infinitely near,  are not base points of $|K_X+L|$ and fail to be separated by $|K_X+L|$, then they lie on an effective curve $B$,  depending on $P,\,\,Q$, satisfying one of the following:
\begin{enumerate}
\item $BL=0,\,\,\,\, B^2=-2$ or $-1$;
\item $BL=1,\,\,\,\, B^2=-1$ or $0$;
\item $BL=2,\,\,\,\, B^2=0$;
\item $L^2=9$ and $L$ is numerically equivalent to $3B$.
\end{enumerate}
\end{enumerate}
\end{theorem}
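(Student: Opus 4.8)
The statement is Reider's theorem, and the plan is to reproduce Reider's argument, whose engine is the Bogomolov instability theorem applied to a rank-two bundle manufactured by the Serre construction. First I would translate the geometric hypotheses into a cohomological nonvanishing. In case (1), a base point $P$ of $|K_X+L|$ means the restriction $H^0(K_X+L)\to (K_X+L)\otimes\mathcal{O}_P$ is not surjective. In case (2), with $P,Q$ not base points but unseparated by $|K_X+L|$, I set $Z$ to be the length-two subscheme $\{P,Q\}$ (a length-two subscheme supported at one point when $P,Q$ are infinitely near), and the failure to separate says the restriction $H^0(K_X+L)\to H^0(\mathcal{O}_Z\otimes(K_X+L))$ is not surjective. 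In both cases, writing $Z$ for the relevant $0$-dimensional subscheme of length $\ell$ (with $\ell=1$ or $\ell=2$), the long exact sequence of $0\to\mathcal{I}_Z\to\mathcal{O}_X\to\mathcal{O}_Z\to0$ twisted by $K_X+L$ yields $H^1(\mathcal{I}_Z(K_X+L))\ne0$.

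The second step is the Serre construction. By Serre duality one has $H^1(\mathcal{I}_Z(K_X+L))^\vee\cong\Ext^1(\mathcal{I}_Z(L),\mathcal{O}_X)$, so the nonvanishing above furnishes a nonzero extension $0\to\mathcal{O}_X\to E\to\mathcal{I}_Z(L)\to0$. The crucial point is that $E$ can be chosen \emph{locally free} of rank two; this holds precisely when $Z$ satisfies the Cayley--Bacharach property with respect to $|K_X+L|$, i.e.\ every section of $K_X+L$ vanishing on a colength-one subscheme of $Z$ vanishes on all of $Z$. For $\ell=1$ this is vacuous, and for $\ell=2$ it is exactly the assertion that a section vanishing at one of $P,Q$ vanishes at the other, which is the failure to separate. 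Granting local freeness, one reads off $c_1(E)=L$ and $c_2(E)=\ell$, hence the discriminant $c_1(E)^2-4c_2(E)=L^2-4\ell$, which is at least $1$ under the hypotheses $L^2\ge5,\ \ell=1$ and $L^2\ge9,\ \ell=2$.

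The third step invokes the Bogomolov instability theorem: since $c_1(E)^2-4c_2(E)>0$, the bundle $E$ is unstable, so there is a sub-line-bundle $\mathcal{O}_X(A)\hookrightarrow E$ with quotient $\mathcal{I}_W(L-A)$ for some $0$-dimensional $W$, such that $D:=2A-L$ lies in the positive cone and satisfies $D^2\ge c_1(E)^2-4c_2(E)>0$. Setting $B:=L-A$, I compare this sequence with the Serre sequence: the composite $\mathcal{O}_X(A)\to E\to\mathcal{I}_Z(L)$ must be nonzero, for otherwise $A$ would be anti-effective and then $D\cdot L=2A\cdot L-L^2<0$ would contradict $D$ lying in the positive cone while $L$ is nef. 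Hence $B$ is effective and contains $Z$. The Chern-class identity $c_2(E)=A\cdot B+\deg W=\ell$ gives $L\cdot B-B^2=A\cdot B\le\ell$; the inequality $D\cdot L\ge0$ (two classes in the closed positive cone, using that $L$ is nef and big) gives $L\cdot B\le\tfrac12 L^2$; and the Hodge index inequality $(D\cdot L)^2\ge D^2\,L^2\ge(L^2-4\ell)L^2$ constrains $L\cdot B$ further.

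The final step is the case analysis. Combining $L\cdot B\ge0$, $L\cdot B-B^2\le\ell$, and the Hodge-index bound, and feeding in $L^2\ge5$ (resp.\ $L^2\ge9$), one checks that the only admissible pairs $(L\cdot B,\,B^2)$ are exactly those listed in (1) (resp.\ (2)); the borderline case $L^2=9$ with $L$ numerically equivalent to $3B$ emerges when equality holds throughout the Hodge-index estimate with $\ell=2$. I expect the main obstacles to be (a) establishing local freeness of $E$ via the Cayley--Bacharach property with full rigor, since this is where the precise geometric hypotheses are actually consumed, and (b) the terminal case analysis, which must eliminate every unlisted pair and correctly isolate the $3B$ case; the Bogomolov instability theorem itself is used as a black box.
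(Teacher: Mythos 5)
The paper does not actually prove this statement: it is Reider's theorem, quoted verbatim (in the expanded form of Theorem 11.4 of \cite{bpv}) from \cite{reider} and used as a black box, so the only meaningful comparison is with Reider's published argument---and your proposal is a faithful outline of exactly that proof: nonvanishing of $H^1(\mathcal{I}_Z(K_X+L))$ from the failure of generation or separation, the Serre construction of a rank-two locally free extension with $c_1(E)=L$, $c_2(E)=\ell$, Bogomolov instability applied since $L^2-4\ell>0$, effectivity of $B=L-A$ via the nonzero composite $\mathcal{O}_X(A)\to\mathcal{I}_Z(L)$, and the terminal analysis combining $L\cdot B-B^2\le\ell$, $2L\cdot B\le L^2$ and the Hodge-index inequality, with the borderline $L^2=9$, $L\equiv 3B$ case correctly identified as the equality case of Hodge index. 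One small slip worth fixing in a full write-up: for $\ell=1$ the Cayley--Bacharach condition is \emph{not} vacuous---for a single reduced point $P$ the colength-one subscheme is empty, so the condition says that every section of $K_X+L$ vanishes at $P$, i.e.\ it is precisely the base-point hypothesis of case (1); since that hypothesis is assumed, the argument is unaffected, but the condition is consumed rather than free.
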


A ball quotient cannot contain a curve of geometric genus $0$ or $1$.
Applying Reider's theorem to $L=K_X$, one sees that the bicanonical system of a ball quotient is base point free, thus defines a morphism. Moreover the bicanonical system is very ample unless the surface contains an effective divisor with the property $(2c)$ or $(2d)$.

 Consider the case  $(2d)$. Suppose that $K_X$ is numerically equivalent to $3B$. Since $H^2(X, \bbZ)$ is torsion-free,
 $$K_X\equiv 3B+t$$ for some $t\in Pic^0(X)$, where "$\equiv$" is linear equivalence. Since $Pic^0(X)$ is a divisible group, one can write $t=3t'$ in $Pic^0(X)$, thus see that
 $K_X$ is divisible by 3 in $Pic(X)$. The 3-divisibility of $K_X$ is equivalent to the liftability of the fundamental group $\pi_1(X)$ to $\SU(2,1)$ \cite{Ko}. But the explicit computation of the fundamental group by \cite{CS2} shows that $\pi_1(X)$ does not lift to $\SU(2,1)$. This rules out the possibility $(2d)$.

\medskip
 It remains to consider the case  $(2c)$.

 \begin{lemma}\label{smoothD} Suppose that there is an effective divisor $B$ on $X$ with $BK_X=2,\,\, B^2=0$. Then $B$ is an irreducible smooth curve of genus $2$.
\end{lemma}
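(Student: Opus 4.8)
The plan is to deduce the claimed structure of $B$ from the numerical data $BK_X = 2$, $B^2 = 0$ together with the two rigid constraints on curves in a ball quotient: every irreducible curve is totally geodesic-compatible with the adjunction inequality, and more importantly, a ball quotient contains no rational or elliptic curve (no curve of geometric genus $0$ or $1$). First I would bound the number of irreducible components. Write $B = \sum_i n_i B_i$ with $B_i$ distinct irreducible and $n_i > 0$. Since $K_X$ is ample (indeed $X$ is a minimal surface of general type with no $(-2)$-curves, being a ball quotient), each $B_i$ satisfies $B_iK_X > 0$, and in fact $B_iK_X \geq K_X \cdot (\text{any curve}) \geq 1$. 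The relation $2 = BK_X = \sum_i n_i (B_iK_X)$ then forces the total multiplicity-weighted count to be small.

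\medskip

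The key step is to rule out everything except the single smooth genus-$2$ curve. For any irreducible curve $C$ on a ball quotient one has $K_XC \geq 2$, because $K_XC = 2p_a(C) - 2 - C^2$ and a rational or elliptic curve is excluded, so the geometric genus is at least $2$; combined with $C^2 \geq -1$-type considerations and the adjunction/genus formula this yields $K_XC \geq 2$ with equality only in a very constrained situation. Given $BK_X = 2$, the bound $\sum_i n_i (B_iK_X) = 2$ with each $B_iK_X \geq 2$ leaves only the possibility of a single component with $n_1 = 1$ and $B_1K_X = 2$; there is no room for a second component or for higher multiplicity. Hence $B = B_1$ is irreducible and reduced.

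\medskip

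Finally I would pin down the genus. With $B$ irreducible, $B^2 = 0$, and $K_XB = 2$, the arithmetic genus from adjunction is $p_a(B) = 1 + \tfrac{1}{2}(B^2 + K_XB) = 1 + \tfrac{1}{2}(0 + 2) = 2$. So $p_a(B) = 2$. Since the geometric genus $g(B) \leq p_a(B) = 2$ but $g(B) \geq 2$ (no genus $0$ or $1$ curves on a ball quotient), we get $g(B) = 2 = p_a(B)$, which forces $B$ to be smooth (any singularity would drop the geometric genus strictly below the arithmetic genus). This gives the conclusion that $B$ is an irreducible smooth curve of genus $2$.

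\medskip

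I expect the main obstacle to be the lower bound $B_iK_X \geq 2$ for each irreducible component, i.e., justifying cleanly that no component can contribute only $1$ to the intersection with $K_X$. This rests entirely on the ball-quotient hypothesis excluding low-genus curves, and one must be careful to combine the genus restriction with the adjunction formula correctly for possibly singular components — a component $C$ with $K_XC = 1$ would have to satisfy $2p_a(C) - 2 - C^2 = 1$, and ruling this out requires knowing $C^2$ is suitably bounded or invoking the totally geodesic structure; handling the self-intersection bookkeeping of the components so that the single-component conclusion is forced is the delicate point.
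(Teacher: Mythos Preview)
Your overall architecture is right and the endgame (adjunction gives $p_a(B)=2$, then $g(B)\ge 2$ forces smoothness) matches the paper exactly. The only real gap is the one you flag yourself: the inequality $K_XC\ge 2$ for every irreducible curve $C$. Your sketched justification (``$C^2\ge -1$-type considerations'') points in the wrong direction, since from $K_XC+C^2=2p_a(C)-2\ge 2$ you need an \emph{upper} bound on $C^2$ to conclude. The clean fix is Hodge index: if $K_XC=1$ then $K_X^2\cdot C^2\le (K_XC)^2$ gives $9C^2\le 1$, so $C^2\le 0$; but then $K_XC=2p_a(C)-2-C^2\ge 2$, a contradiction. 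With that one line your argument is complete.

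The paper sidesteps this entirely. Instead of proving the universal bound $K_XC\ge 2$, it uses the hypothesis $B^2=0$ directly: if $B$ is reducible then, since $K_X$ is ample and $BK_X=2$, every irreducible component has $K_X$-degree $1$; and from $B^2=0$ one of them, say $B_1$, has $B_1^2\le 0$. Adjunction then gives $p_a(B_1)\le 1$, contradicting the absence of genus~$\le 1$ curves on a ball quotient. This is shorter because it extracts the needed self-intersection bound from $B^2=0$ rather than from Hodge index; your route proves a slightly stronger statement (no curve of $K_X$-degree $1$ at all on $X$) at the cost of one extra ingredient.
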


\begin{proof}
 Suppose that there is such an effective divisor $B$. If $B$ is reducible, then since $K_X$ is ample, there is an irreducible component  $B_1$ of $B$ with $B_1^2\le 0$, $B_1K_X=1$, impossible. If $B$ is irreducible and singular, then $B$ has geometric genus $\le 1$, again impossible, since a ball quotient cannot contain a curve of geometric genus $0$ or $1$.
\end{proof}

\section{Key Lemma}

The following lemma will play a key role in our proof of Theorem \ref{Main}.

\begin{lemma}\label{Main2} Suppose that the Cartwright-Steger surface $X$ contains a smooth curve $B$ with $K_XB=2, B^2=0$. Then the following hold.
 \begin{enumerate}
 \item All such curves $B$ define the same class in the N\'eron-Severi group $\NS(X)\subset H^2(X, \bbZ)$. In other words, the difference of two such curves is an element of the Picard variety $\Pic^0(X)$ which is an elliptic curve.
 \item The image $\sigma(B)$ under an automorphism $\sigma$ is another such curve, and is disjoint from $B$, where $\sigma$ is a generator of $\Aut(X)\cong \bbZ_3$. The three curves $B$, $\sigma(B)$, $\sigma^2(B)$ are mutually disjoint.
 \end{enumerate}
\end{lemma}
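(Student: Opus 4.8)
The plan is to prove both parts of the lemma by exploiting the numerical constraints together with the $\sigma$-action and the structure of the Albanese map. I would start with part (1). Let $B$ and $B'$ be two smooth curves with $K_XB=2$, $B^2=0$, and set $D=B-B'\in\NS(X)$. Since $\NS(X)$ has rank $3$ and is generated over $\bbQ$ by $E_1,E_3,C_1$, I would compute the intersection numbers $B\cdot E_i$, $B\cdot C_j$ using the constraints available. The key pieces of data are $K_XB=2$ together with $K_X\equiv E_3$ (modulo $\Pic^0$), which forces $B\cdot E_3=2$, and the Albanese fibration: since every Albanese fibre is irreducible and the general fibre $F$ is numerically $-E_1+5E_2$ with $K_XF=3g(F)-3=54$, I would determine $B\cdot F$. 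The decisive observation is that $B$ has genus $2<g(F)=19$, so $B$ cannot be a multisection of high degree; more precisely, the adjunction and the bound $K_XB=2$ should pin down $B\cdot F$ to a single value, and hence pin down the projection of $[B]$ onto the fibre direction. I expect this forces $[B]$ to be uniquely determined in $\NS(X)\otimes\bbQ$, and since $H^2(X,\bbZ)$ is torsion-free, uniquely in $\NS(X)$ itself. That $B-B'\in\Pic^0(X)$ is then immediate from the definition of numerical equivalence.

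For part (2), I would first note that $\sigma(B)$ is again smooth with $K_X\sigma(B)=K_XB=2$ and $\sigma(B)^2=B^2=0$, since $\sigma$ is an automorphism preserving $K_X$; so $\sigma(B)$ is another curve of the same type. The heart of the matter is disjointness. By part (1), $B$, $\sigma(B)$, $\sigma^2(B)$ all carry the same $\NS$-class, so $B\cdot\sigma(B)=B^2=0$. Since $K_X$ is ample and $B\cdot\sigma(B)=0$ with both curves effective, any point of $B\cap\sigma(B)$ would be a point of positive local intersection contradicting the global intersection number being zero—hence $B$ and $\sigma(B)$ are genuinely disjoint (not merely numerically). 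The same argument applied to every pair among $B,\sigma(B),\sigma^2(B)$ gives mutual disjointness.

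The main obstacle I anticipate is part (1): establishing that the numerical class of $B$ is forced to be unique, rather than merely lying in a coset or a one-parameter family. The rank-$3$ lattice $\NS(X)$ gives three linear conditions to determine $[B]$, and I have at most $K_XB=2$ plus $B^2=0$ plus whatever the Albanese structure contributes directly. I would need to show these suffice. The cleanest route is probably to use the $\sigma$-invariance of the \emph{class} $[B]$: by the proposition that $\Aut(X)$ is cohomologically trivial, $\sigma^*$ fixes $[B]$, so $[\sigma(B)]=[B]$ automatically, which already gives $B\cdot\sigma(B)=0$ for free and simultaneously supplies part of part (2). Then for uniqueness I would intersect the hypothetical difference $D=B-B'$ against $E_1,E_3,C_1$ and against $B$ itself: from $D\cdot B=(B-B')\cdot B=B^2-B'\cdot B$ and the fact that $[B]=[B']$ is $\sigma$-fixed of the same type, I expect the Hodge index theorem to close the argument, since $D^2\le 0$ with $D\cdot K_X=0$ forces $D\equiv 0$ once I verify $D\cdot K_X=K_XB-K_XB'=0$. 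The delicate point is ensuring $B^2=B'\cdot B$, i.e. that two distinct curves of this type meet in exactly $B^2=0$ points; but if they are numerically equivalent this is automatic, making the logic slightly circular unless the Albanese computation independently fixes the class. I therefore expect the Albanese fibration—forcing $B\cdot F$ and thus one coordinate of $[B]$—to be the genuinely necessary ingredient that breaks the potential circularity.
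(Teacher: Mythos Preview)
Your proposal has genuine gaps in both parts.

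For part~(1), neither of your two suggested routes closes. The Hodge index argument you sketch is, as you yourself note, circular: from $D=B-B'$ with $D\cdot K_X=0$ you only get $D^2=-2(B\cdot B')\le 0$, which is trivial and does not force $D\equiv 0$ unless you already know $B\cdot B'=0$. The Albanese route also fails: if you carry out the computation, you find that \emph{both} rational solutions to $K_XD=2$, $D^2=0$ in $\NS(X)\otimes\bbQ$ satisfy $D\cdot F=8$, so the fibre class does not distinguish them. What the paper actually does is a direct lattice computation: writing $D=xE_1+yE_3+zC_1$ and imposing $DE_3=2$, $D^2=0$ together with integrality of $DE_1$ and $DC_1$ reduces to a small Diophantine problem (expressing $16$ as a sum of two squares), giving exactly two rational solutions $D_I$ and $D_{II}$. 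One then checks $D_I\cdot D_{II}=8/9\notin\bbZ$, so at most one of them lies in $\NS(X)$; a symmetry swapping $C_1\leftrightarrow C_2$ lets one name that class $D_I$. This integrality obstruction is the missing idea in your plan.

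For part~(2), your disjointness argument is fine \emph{once you know} $\sigma(B)\neq B$ as curves, but you never establish this. The equality $[\sigma(B)]=[B]$ (from part~(1) or from cohomological triviality) gives $B\cdot\sigma(B)=0$, and if $\sigma(B)\neq B$ then two distinct irreducible curves with intersection $0$ are disjoint; but nothing you wrote excludes $\sigma(B)=B$. The paper rules this out as follows: first $O_i\notin B$, since otherwise the local multiplicities at $O_i$ would force $B\cdot E_j>2$ for some $j$; then if $\sigma(B)=B$, the set $B\cap E_1$ is $\sigma$-stable with at most two points (as $B\cdot E_1=2$), so $\sigma$ fixes a point of it, which would have to be some $Q_i$---contradicting $Q_i\notin E_1$. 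This fixed-point argument is the second missing ingredient.
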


\begin{remark}  It can be shown that $mB$ does not move in an algebraic family for $m<9$. Thus $\sigma(B)-B$, being an element of $\Pic^0(X)$, has order at least 9, may have infinite order. In particular, $\sigma(B)-B$ is not a 3-torsion.
The non-existence of such a curve $B$ is a subtle problem. The rest of this paper will be devoted to its proof.
\end{remark}

On the Cartwright-Steger surface $X$ the three curves $E_1, E_3, C_1$ form a $\mathbb{Q}$-basis for $\NS(X)$ with  intersection matrix
\begin{displaymath}
\left( \begin{array}{ccc}
5 & 9 & 11 \\
9 & 9 & 9 \\
11 & 9 & -1 \\
\end{array} \right)
\end{displaymath} with determinant $18^2$.
Any divisor $D$ on $X$ with
 $$DE_1=a,\quad DE_3=b, \quad DC_1=c$$ must be expressed as
 \beq\label{eqD} D\sim xE_1+yE_3+zC_1\eeq  where
$$x= \frac{-5a+6b-c}{18}$$
$$y= \frac{6a-7b+3c}{18}$$
$$z= \frac{-a+3b-2c}{18}.$$
The equality
$$D^2= D( xE_1+yE_3+zC_1)= xa+yb+zc$$ becomes
\beq\label{eqc} 2c^2-2(3b-a)c+5a^2+7b^2-12ab+18D^2=0.\eeq
The discriminant of this quadratic equation for $c$
$$\frac{\Delta}{4} = -(3a-3b)^2+4b^2-36D^2$$  must be a square number, since $a, b, c$ are integers.
In particular
 \beq\label{eqDelta} 4b^2-36D^2=(3a-3b)^2+s^2.\eeq for some integer $s\ge 0$.

 \bigskip
{\bf Step I.}
\textit{ Suppose that $X$ contains a divisor $D$, not necessarily effective,  with $K_XD=2, D^2=0$. Then $D$ is numerically equivalent to either
$$D_I= 1/9(E_1-E_3+2C_1)\sim 1/9(2E_3+C_1-C_2)$$
or
$$D_{II}= 1/9(-E_1+5E_3-2C_1)\sim 1/9(2E_3-C_1+C_2).$$
}

\begin{proof} In this case, $b=DE_3=DK=2$ and $D^2=0$, thus (\ref{eqDelta}) becomes
  $$4b^2-36D^2=16=(3a-6)^2+s^2$$ for some integer $s$. Note that $16=4^2+0^2$ is the unique expression as a sum of two squares. Since $3a-6\neq \pm 4$ for any integer $a$,
  we have $$a=2,\,\,\,s=4.$$
  The quadratic equation (\ref{eqc}) yields solutions for $c$
  $$c=\Big((3b-a)\pm\sqrt{\frac{\Delta}{4}}\Big)/2=((3b-a)\pm s)/2=0 \,\,{\rm or}\,\,4.$$
  Thus we have two solutions: $(a,b,c)=(2, 2, 0)$ or $(2, 2,4)$, yielding the two solutions $D_I$ and $D_{II}$. Finally the numerical equivalences follow from the linear equivalence
  $$9E_3\equiv 3(E_1+C_1+C_2)$$ (see the subsection 1.5.)
\end{proof}

 \bigskip
{\bf Step II.}
\textit{ For any $i=1,2,3$, $D_IE_i=D_{II}E_i=2$.
$$D_IC_1=0,\,D_IC_2=4,\, D_IC_3=8,\,D_IC_4=4,\,D_IF=8;$$
$$D_{II}C_1=4,\,D_{II}C_2=0,\, D_{II}C_3=4,\,D_{II}C_4=8,\,D_{II}F=8.$$
The ${\bbQ}$-divisors $D_I$ and $D_{II}$  cannot represent simultaneously integral divisors, i.e., cannot exist simultaneously in $\NS(X)$.
}

\begin{proof} The intersection numbers can be obtained by using Table \ref{intCS}. Another computation shows that
$$D_ID_{II}=8/9,$$ not an integer, thus $D_I$ and $D_{II}$ cannot represent simultaneously integral divisors.
\end{proof}

Once the order of the 3 points $O_1, O_2, O_3$ were fixed, the curves $E_1, E_2$ are distinguished from each other as they have different multiplicities at $O_1$. But $C_1$ and $C_2$ (resp. $C_3$ and $C_4$) cannot from each other, as they have the same multiplicity at each $O_i$. One may switch the names of $C_1$ and $C_2$, and simultaneously switch the names of $C_3$ and $C_4$. Under this switch, $D_I$ and $D_{II}$ are interchanged. Thus one may assume that only $D_I$ can be represented by an integral divisor.

 \bigskip
{\bf Step III.}
\textit{ All divisors $D$, not necessarily effective,  with $K_XD=2, D^2=0$ are numerically equivalent to
$$D_I= 1/9(E_1-E_3+2C_1)\sim 1/9(2E_3+C_1-C_2).$$
Here among the two curves in $\pi^{-1}(D_A)$ with multiplicities $0,1,2$ at $O_1$, $O_2$, $O_3$, respectively the choice of $C_1$ is such that $DC_1=0$.
}

\medskip
This completes the proof of the first statement of Lemma \ref{Main2}. The following proves the second statement.

 \bigskip
{\bf Step IV.}
\textit{  For any smooth curve $B$ on $X$ with $K_XB=2, B^2=0$ the image $\sigma(B)$ under an automorphism $\sigma$ is another such curve, and is disjoint from $B$.
}

\begin{proof} First note that $O_i\notin B$ for any $i$ (if $O_i\in B$, then the intersection number $BE_j=2$ is greater than or equal to the product of the multiplicities of $B$ and $E_j$ at $O_i$ for all $j$, impossible.)

Suppose that $\sigma(B)=B$ as curves. Then, since $E_1$ is $\Aut(X)$-invariant, we have $\sigma(B\cap E_1)=B\cap E_1$. Since $BE_1=2$, the set $B\cap E_1$ is non-empty and has at most two points.
Since $\sigma$ is of order 3, it fixes a point in $B\cap E_1$, which must be $Q_i$ for some $i$. Then $Q_i\in E_1$, impossible since none of the six points $Q_1,..., Q_6$ is contained in the union of the 7 geodesic curves $E_1, E_2, E_3$, $C_1, ..., C_4$. This proves that $\sigma(B)\neq B$. They are numerically equivalent to each other by Step III, so $\sigma(B) B=B^2=0$, hence they are disjoint from each other.
\end{proof}

\section{The quotient of the Cartwright-Steger surface}

The quotient $X/Aut(X)$ has $$K_{X/Aut(X)}^2=\dfrac{1}{3}K_X^2=3$$ and has 3 singular points of type $1/3(1,1)$ at the images $\bar{O_i}$ of $O_i$ and 6 singular points of type $1/3(1,2)$ at the images $\bar{Q_j}$ of $Q_j$. Let $$\nu: Y\to X/Aut(X)$$ be the minimal resolution, $R_i$ be the $(-3)$-curve lying over the singular point $\bar{O_i}$, and $R_{j1}\--R_{j2}$ the $A_2$-cofiguration of $(-2)$-curves lying over $\bar{Q_j}$. Since $X/Aut(X)$ is simply connected by \cite{CS2}, so is $Y$.  Thus a numerical equivalence between integral divisors on $Y$ is a linear equivalence.
From the information in Section 1 one gets
$$p_g(Y):=h^{2,0}(Y)=1,\,\,\,\pi_1(Y)=\{1\},\,\,\, h^{1,0}(Y)=0.$$
Computing the adjunction, one gets
$$K_Y= \nu^*K_{X/Aut(X)}-\dfrac{1}{3}(R_1+R_2+R_3),$$
which implies that
$$K_Y^2=2.$$ This, together with N\"other formula, determines the remaining Hodge number
$$h^{1,1}(Y)=18.$$

\bigskip
{\bf Notation.}
For a curve $D$ on $X$,  the image on $X/Aut(X)$ will be denoted by $\bar{D}$ and the proper transform of $\bar{D}$ on $Y$ by $D'$.

\begin{proposition}\label{Y} \begin{enumerate} \item $K_Y=E_3'+R_2$. In particular, $K_Y$ is nef.
\item $Y$ is a simply connected minimal surface of general type with $K_Y^2=2$, $p_g(Y)=1$, $b_2(Y)=20$, $\rk Pic(Y)=h^{1,1}(Y)=18$.
\item $E_i'$ is a $(-3)$-curve for $i=1,2,3$.
\item $C_1'$ and  $C_2'$ are (smooth) elliptic curves.
\item $C_3'$ and $C_4'$ are curves with geometric genus $1$ and arithmetic genus $11$.
\item The $12$  $(-2)$-curves $R_{lm}$ are disjoint from the $10$ curves $E_i'$, $R_j$, $C_k'$.
\item The intersection matrix  of the $10$ curves is given as follows:
\begin{table}[ht]
\centering
  \begin{tabular}{|c|c|c|c|c|c|c|c|c|c|c|}\hline
&$E_1'$&$E_2'$&$E_3'$&$R_1$&$R_2$&$R_3$&$C_1'$&$C_2'$&$C_3'$&$C_4'$  \\ \hline\hline
$E_1'$&$-3$&$0$&$0$&$3$&$1$&$2$&$2$&$2$&$2$&$2$\\
$E_2'$&$0$&$-3$&$0$&$2$&$1$&$3$&$0$&$0$&$4$&$4$\\
$E_3'$&$0$&$0$&$-3$&$1$&$4$&$1$&$1$&$1$&$3$&$3$\\
$R_1$&$3$&$2$&$1$&$-3$&$0$&$0$&$0$&$0$&$4$&$4$ \\
$R_2$&$1$&$1$&$4$&$0$&$-3$&$0$&$1$&$1$&$3$&$3$ \\
$R_3$&$2$&$3$&$1$&$0$&$0$&$-3$&$2$&$2$&$2$&$2$ \\
$C_1'$&$2$&$0$&$1$&$0$&$1$&$2$&$-2$&$4$&$10$&$4$\\
$C_2'$&$2$&$0$&$1$&$0$&$1$&$2$&$4$&$-2$&$4$&$10$\\
$C_3'$&$2$&$4$&$3$&$4$&$3$&$2$&$10$&$4$&$14$&$20$\\
$C_4'$&$2$&$4$&$3$&$4$&$3$&$2$&$4$&$10$&$20$&$14$
\\\hline \hline
\end{tabular}
\caption{}\label{intY}
\end{table}
\item $\Pic(Y)=\NS(Y)$ is generated up to finite index by the $18$ curves $$E_1', E_3', R_1, R_2, R_3, C_1', R_{ij}.$$
\item There are many linear equivalences on $Y$, e.g.,
$$ E_1'+E_2'+R_1+R_3\equiv 2E_3'+2R_2\equiv 2K_Y,$$
$$ C_1'+C_3'\equiv C_2'+C_4'\equiv 4E_3'+4R_2\equiv 4K_Y,$$
$$ E_1'+C_1'+C_2'+R_3\equiv 3E_3'+3R_2\equiv 3K_Y,$$
$$ E_2'+E_3'+R_1+R_2\equiv C_1'+C_2'.$$
\item The image $F'$ on $Y$ of a Albanese fibre $F\sim -E_1+5E_2$ on $X$
$$F'\equiv =\nu^*(-3\bar{E_1}+15\bar{E_2})=-3E_1'+15E_2'+7R_1+4R_2+13R_3.$$
\item The genus $19$ fibration $|F'|$ on $Y$ over $\bbP^1$ has $3$ reducible fibres,
$$3F_0'+R_1+R_2+R_3,$$
$$3F_1'+2R_{12}+R_{11}+2R_{22}+R_{21}+2R_{32}+R_{31},$$
$$3F_2'+2R_{42}+R_{41}+2R_{52}+R_{51}+2R_{62}+R_{61}.$$
\end{enumerate}
\end{proposition}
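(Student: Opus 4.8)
The whole proposition rests on two translation formulas between intersection theory on $X$ and on $Y$, so I would establish these first. Write $q\colon X\to W:=X/\Aut(X)$ for the quotient map, of degree $3$. Each geodesic curve $A\in\{E_i,C_j\}$ is $\sigma$-invariant, irreducible, and meets $\Fix(\sigma)$ only in $O_1,O_2,O_3$, where it is a union of $m(A,O_i)$ mutually transversal smooth branches; each branch is $\sigma$-invariant because at a point of type $1/3(1,1)$ the map $\sigma(x,y)=(\zeta x,\zeta y)$ preserves every tangent direction. Since $\bar O_i\in W$ is the cone singularity $\frac13(1,1)$ resolved by the single $(-3)$-curve $R_i\cong\bbP^1$, the strict transform $A'$ meets $R_i$ transversally in the $m(A,O_i)$ points of $R_i$ cut out by the tangent directions, so $A'\cdot R_i=m(A,O_i)$; orthogonality $\nu^*\bar A\cdot R_i=0$ then forces $\nu^*\bar A=A'+\tfrac13\sum_i m(A,O_i)R_i$. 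Combining $q^*\bar A=A$ (so $A\cdot B=3\,\nu^*\bar A\cdot\nu^*\bar B$) with $R_i^2=-3$, $R_iR_j=0$ gives the master formula
\[
A'\cdot B'=\frac{A\cdot B-\sum_i m(A,O_i)\,m(B,O_i)}{3},\qquad A'\cdot R_i=m(A,O_i).
\]
Part (7) is then a direct substitution of Tables \ref{multEC} and \ref{intCS} (e.g. $(E_1')^2=(5-14)/3=-3$), and part (6) is immediate: no $E_i$ or $C_j$ passes through any $Q_l$, so their images avoid the $A_2$-configurations $R_{l1},R_{l2}$ lying over $\bar Q_l$.

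For the genera I would use that $q$ is \'etale away from the $O_i,Q_l$, so each $A'$ is smooth away from the $R_i$, while the strict transform separates the branches over each $\bar O_i$; hence $A'$ is the quotient of the normalization $\widetilde A$ (of genus $g(A)$) by $\sigma$, which fixes exactly the $\sum_i m(A,O_i)$ points over the $O_i$, each totally ramified. Riemann--Hurwitz then yields all geometric genera at once: $g(E_i')=0$ and $g(C_1')=g(C_2')=g(C_3')=g(C_4')=1$. This gives part (3) (the $E_i'$ are smooth rational with $(E_i')^2=-3$) and part (4) (a quotient of a smooth curve is smooth, so $C_1',C_2'$ are smooth elliptic); for part (5) the arithmetic genus follows from adjunction, $p_a(C_k')=1+\tfrac12((C_k')^2+K_YC_k')=1+\tfrac12(14+6)=11$. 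For part (1), from $K_X\equiv E_3$ and $K_X=q^*K_W$ (as $q$ is \'etale in codimension one) one gets $K_W\equiv\bar E_3$; pulling back and using $K_Y=\nu^*K_W-\tfrac13(R_1+R_2+R_3)$ gives
\[
K_Y\equiv\nu^*\bar E_3-\tfrac13(R_1+R_2+R_3)=E_3'+\tfrac13(R_1+4R_2+R_3)-\tfrac13(R_1+R_2+R_3)=E_3'+R_2,
\]
a linear equivalence since $Y$ is simply connected. Nefness is then cheap: for irreducible $\Gamma\ne E_3',R_2$ one has $K_Y\Gamma=(E_3'+R_2)\Gamma\ge0$, while $K_YE_3'=(E_3')^2+R_2\cdot E_3'=1$ and $K_YR_2=E_3'\cdot R_2+R_2^2=1$ from Table \ref{intY}. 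Part (2) follows: $\pi_1(Y)=\pi_1(W)=\{1\}$ by \cite{CS2}; $K_Y$ nef with $K_Y^2=2>0$ makes $Y$ minimal of general type; Noether gives $c_2(Y)=22$, hence $b_2=20$ and $h^{1,1}=18$; and $\rk\Pic(Y)=18$ because the cohomological triviality of $\sigma$ (Section 1.4) gives $H^2(W,\bbQ)=H^2(X,\bbQ)$, so the transcendental lattice of $Y$ has the same rank $2$ as that of the maximal--Picard--number surface $X$, forcing $\rho(Y)=b_2-2=18$.

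The remaining parts are bookkeeping on top of the master formula. For (8), $\Pic(Y)=\NS(Y)$ has rank $18$, and I would check that the listed $18$ curves are independent by evaluating their Gram determinant: the twelve $R_{lm}$ span a negative-definite sublattice (six copies of $A_2$) orthogonal to the other six, whose $6\times6$ intersection matrix I would verify is nondegenerate. For (9) I would transport each relation of Section 1.5 through $\nu^*\bar A=A'+\tfrac13\sum m(A,O_i)R_i$: e.g. $E_1+E_2\equiv 2E_3$ becomes $E_1'+E_2'+R_1+R_3\equiv 2E_3'+2R_2=2K_Y$, and similarly for the others, each a linear equivalence because $Y$ is simply connected. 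Finally (10)--(11) concern the descent of the Albanese pencil: since $\sigma$ acts on $Alb(X)$ with three fixed points, $X\to Alb(X)\to Alb(X)/\sigma\cong\bbP^1$ descends to a fibration $Y\to\bbP^1$; a general fibre is $\nu^*\bar F$ with $\bar F\equiv-3\bar E_1+15\bar E_2$ (from $q^*\bar F\equiv F+\sigma F+\sigma^2F\equiv3F$), giving the class in (10) and, being a fibre with $(F')^2=0$, genus $19$ by adjunction. The three multiple fibres lie over the three $\sigma$-fixed points of $Alb(X)$ and equal $\nu^*(3\bar F_k)$; pulling back and inserting the local corrections at the fixed points produces the three reducible fibres of (11).

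I expect the real work to lie in (11). Everything else is mechanical once the master formula is in hand, but the shapes of the multiple fibres depend on the precise local behaviour of the special fibres $F_0,F_1,F_2$ at the fixed points. At $O_i$ one must confirm that $F_0$ is a single smooth $\sigma$-invariant branch, so that $\nu^*(3\bar F_0)=3F_0'+R_1+R_2+R_3$; at a point $Q_l$ of type $1/3(1,2)$ the only $\sigma$-invariant branches are the two coordinate eigen-axes, and one must decide which end of the $A_2$-chain the fibre meets, which produces the asymmetric coefficients $R_{l1}+2R_{l2}$ (the discrepancy coefficients $\tfrac13,\tfrac23$ of a smooth branch through an $A_2$ point). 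Pinning down these local multiplicities is the one step not formally implied by the two translation formulas, and it is where I would concentrate the argument.
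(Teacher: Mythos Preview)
Your plan is correct and is essentially the approach the paper has in mind; the paper states Proposition~\ref{Y} without proof, deriving only $K_Y^2=2$, $p_g(Y)=1$, $h^{1,1}(Y)=18$ in the paragraph preceding it and leaving the rest to ``the information in Section~1.'' Your master formula
\[
A'\cdot B'=\tfrac13\Bigl(A\cdot B-\sum_i m(A,O_i)\,m(B,O_i)\Bigr),\qquad A'\cdot R_i=m(A,O_i),
\]
is exactly the engine needed for (3)--(9), and your handling of (1), (2), (10), (11) matches the paper's setup.

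One small slip worth fixing: you write ``$A'$ is the quotient of the normalization $\widetilde A$ by $\sigma$,'' but for $A=C_3,C_4$ this is false. From $p_a(C_3)=50$ and $g(C_3)=10$ the $\delta$-invariant is $40$, while the ordinary multiple points at $O_1,O_2,O_3$ with multiplicities $(4,3,2)$ contribute only $\binom{4}{2}+\binom{3}{2}+\binom{2}{2}=10$; thus $C_3,C_4$ carry additional singularities over $\pi^{-1}(P_1)$, and these survive on $C_3',C_4'$ since $\nu$ resolves only the quotient singularities. What your Riemann--Hurwitz argument actually computes is the genus of $\widetilde A/\sigma$, i.e.\ the genus of the \emph{normalization} of $A'$; this gives the geometric genus $1$ correctly, and your adjunction computation $p_a(C_3')=1+\tfrac12(14+6)=11$ then finishes (5) as stated. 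So the conclusion is right, only the intermediate sentence needs to read ``the normalization of $A'$ is $\widetilde A/\sigma$.'' For $E_i',C_1',C_2'$ your smoothness claim is fine, since those curves are smooth away from the $O_i$.
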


\section{The quotient of $Y$ by Borisov involution}

The canonical ring of $Y$ is generated by 1 element in degree 1, 3 elements in degree 2,  4 elements in degree 3.

L. Borisov has informed me that he found an octic equation for $Y$ in $\bbP^4$, by first obtaining equations of a ball quotient which is a $\bbZ_7:\bbZ_3$ Galois cover of $X/\Aut(X)$ (this cover does not factor through $X$), then getting the octic as the equation of the invariant functions.  As a bi-product he found an involution $\alpha$ of $Y$, which switches the six curves
$$E_1'\leftrightarrow R_3, \,\,\, E_2'\leftrightarrow R_1, \,\,\,E_3'\leftrightarrow R_2,$$
and  permutes the six $A_2$-configurations $R_{i1}\--R_{i2}$ into 3 orbits.

In this section we prove the following:

\begin{proposition}\label{Z}  \begin{enumerate}
\item $\alpha(C_i')=C_i'$ for $i=1,2,3,4$.
 \item The involution $\alpha$ is fixed point free.
\item The quotient $Z:=Y/\alpha$ is a minimal surface of general type with $$K_Z^2=1,\,\,\, p_g(Z)=0,\,\,\, \pi_1(Z)=\bbZ/2\bbZ.$$
\end{enumerate}
\end{proposition}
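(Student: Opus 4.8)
The plan is to establish the three assertions in order, since each feeds the next. For part (1), I would use the fact that $\alpha$ must preserve the canonical class $K_Y = E_3' + R_2$ and hence preserves intersection numbers with $K_Y$. From Table \ref{intY} the four curves $C_1', C_2', C_3', C_4'$ have a distinctive pattern of intersection numbers against the configuration $\{E_i', R_j\}$; since $\alpha$ is known to swap $E_1'\leftrightarrow R_3$, $E_2'\leftrightarrow R_1$, $E_3'\leftrightarrow R_2$, I would check that applying this swap to the intersection vector of each $C_k'$ returns the same multiset of values, and moreover that $\alpha$ cannot send any $C_k'$ to an $E_i'$, $R_j$, or $R_{lm}$ (different self-intersections and $K_Y$-degrees rule this out). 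This pins down that $\alpha$ permutes $\{C_1',C_2',C_3',C_4'\}$ among themselves. To see each is actually fixed, I would use the self-intersections: $C_1', C_2'$ are the only $(-2)$-curves of this type (elliptic, part (4)), and $C_3', C_4'$ are the only ones of self-intersection $14$; within each pair one can distinguish them using $C_i'C_j'$ entries (e.g.\ $C_1'C_3'=10$ versus $C_1'C_4'=4$) together with the constraint imposed by the already-determined action on the $E_i', R_j$, forcing $\alpha$ to fix rather than swap within each pair.

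For part (2), fixed-point freeness, I would combine the topological/holomorphic Lefschetz formulas with the genus-$19$ fibration structure. Because $\alpha$ acts on $Y$ as an involution, its fixed locus is a finite set of points plus possibly some smooth curves; a curve in $\Fix(\alpha)$ would have to be $\alpha$-invariant, but the explicit action (swapping the $E_i'$ with the $R_j$ and permuting the $R_{lm}$) shows no component of the distinguished configuration is pointwise fixed, and the $C_k'$ are invariant as curves but $\alpha$ restricted to an elliptic curve with the right action can still be free. The cleanest route is to compute the holomorphic Lefschetz number $\sum (-1)^i \Tr(\alpha^* \mid H^i(Y,\OOO_Y))$: with $p_g(Y)=1$ and $h^{1,0}(Y)=0$ this is $1 + \Tr(\alpha^*\mid H^0(\Omega^2))$, and comparing with the contribution $\sum_{P\in\Fix}\frac{1}{\det(I - d\alpha_P)}$ from isolated fixed points should force the fixed-point count to be zero once one knows $\alpha$ acts as $-1$ on $H^{2,0}$ (which follows since $Z=Y/\alpha$ must then have $p_g(Z)=0$). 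Equivalently, and perhaps more robustly, I would verify that the induced involution $\bar\alpha$ on $K_Z$ and the numerical invariants are consistent only with a free action, using that a nonempty fixed locus would change the Euler characteristic relation between $Y$ and $Z$.

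For part (3), once $\alpha$ is free the quotient $Z = Y/\alpha$ is smooth, $\pi_1(Z)$ surjects onto $\bbZ/2\bbZ$ with $\pi_1(Y)=\{1\}$ giving exactly $\pi_1(Z)=\bbZ/2\bbZ$, and the invariants follow by division: $K_Z^2 = \frac{1}{2}K_Y^2 = 1$, $\chi(\OOO_Z) = \frac{1}{2}\chi(\OOO_Y) = 1$, whence $p_g(Z) = q(Z) = 0$ (using $q(Y)=h^{1,0}(Y)=0$). Minimality and general type follow because $K_Z$ pulls back to $K_Y$ which is nef and big by part (1) of Proposition \ref{Y}; a $(-1)$-curve on $Z$ would pull back to a $(-1)$-curve or a disjoint pair on the minimal surface $Y$, contradicting minimality of $Y$. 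I expect part (2), the fixed-point freeness, to be the main obstacle: the Lefschetz computation requires knowing the trace of $\alpha^*$ on $H^{2,0}(Y)$ and on $H^{1,1}(Y)$, and ruling out isolated fixed points demands care because a priori $\alpha$ could have a small even number of fixed points compatible with the topological Euler characteristic; the holomorphic Lefschetz fixed point formula, applied as in the proof that $\Aut(X)$ is cohomologically trivial, is the tool I would lean on to close this gap.
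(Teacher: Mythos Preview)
Your plan for part (3) is fine and matches the paper. The real problems are in parts (1) and (2).

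\textbf{Part (1).} Your argument assumes that intersection numbers with the known curves $E_i',R_j,R_{lm}$ single out each $C_k'$. They do not. From Table~\ref{intY} one has $C_1'E_1'=C_1'R_3=2$, $C_1'E_2'=C_1'R_1=0$, $C_1'E_3'=C_1'R_2=1$, and the \emph{same} equalities hold for $C_2'$; likewise $C_3'$ and $C_4'$ are indistinguishable by their intersections with the six curves $E_i',R_j$. In fact the simultaneous swap $C_1'\leftrightarrow C_2'$, $C_3'\leftrightarrow C_4'$ is fully compatible with every entry of Table~\ref{intY} and with the given action $E_1'\leftrightarrow R_3$, $E_2'\leftrightarrow R_1$, $E_3'\leftrightarrow R_2$. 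So intersection bookkeeping alone cannot force $\alpha(C_1')=C_1'$. The paper makes this precise: writing $\alpha(C_1')$ in $\NS(Y)\otimes\bbQ$ and imposing the intersection constraints plus $\alpha(C_1')^2=-2$ leaves exactly two numerical classes, namely $C_1'$ and $-R_3-E_1'+3(E_3'+R_2)-C_1'$ (the latter is the class of $C_2'$ by the relation $E_1'+C_1'+C_2'+R_3\equiv 3K_Y$). Eliminating the second possibility is the substantive step: the paper computes $\Tr(\alpha^*|\NS(Y))=-2$ in that case, runs the topological and holomorphic Lefschetz formulas for both signs of $\alpha^*$ on $H^{2,0}$, and reaches a contradiction via a determinant/unimodularity argument in one sign and a small Diophantine obstruction in the other. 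Your proposal lacks any mechanism to perform this elimination.

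\textbf{Part (2).} Your sketch is partly circular: you infer $\alpha^*=-1$ on $H^{2,0}(Y)$ from $p_g(Z)=0$, but $p_g(Z)=0$ is exactly what you are trying to establish. The paper does not know the sign in advance; it treats both cases. In each case it uses the two Lefschetz formulas to get $\sum A_i^2$ and $\sum K_YA_i$ for the fixed curve $A=\sum A_i$, writes $A$ in the $\alpha$-invariant part of $\NS(Y)$ as $x(E_1'+R_3)+y(E_3'+R_2)+bC_1'$, and then needs one extra geometric input you do not mention: Lemma~\ref{isol}, which says that at an isolated fixed point of an involution tangent directions are preserved, hence transversal points of $E_i'\cap\alpha(E_i')=E_i'\cap R_j$ cannot be isolated fixed points. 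This bounds $E_i'A$ and forces $E_1'A=E_2'A=E_3'A\in\{0,2\}$, after which the resulting linear/quadratic system has only the trivial solution in Case~I (giving $Y^\alpha=\emptyset$) and no solution in Case~II (excluding that sign). Without Lemma~\ref{isol} or an equivalent local argument, the holomorphic Lefschetz formula alone does not rule out a small even number of isolated fixed points, which is precisely the difficulty you flag but do not resolve.
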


 \begin{lemma}\label{twoC} $\alpha(C_1')=C_1'\,\,{\rm or}\,\, -R_3-E_1'+3(E_3'+R_2)-C_1'.$
 \end{lemma}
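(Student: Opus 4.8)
The plan is to exploit that $\alpha^*$ is an isometry of the nondegenerate lattice $\NS(Y)$, so that the class $\alpha(C_1')$ is pinned down by its intersection numbers against a spanning family of curves. Since $\alpha$ is an involution, for every curve $D$ whose image $\alpha(D)$ is already known I can compute
$$\alpha(C_1')\cdot D = C_1'\cdot \alpha^{-1}(D)=C_1'\cdot\alpha(D).$$
The idea is to run this identity over enough curves to determine $\alpha(C_1')$ up to a controllable ambiguity, and then to cut that ambiguity down using the numerical invariants of $\alpha(C_1')$ that are forced by its being the image of $C_1'$.

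First I would record the invariants that transport automatically: $\alpha(C_1')$ is a curve with $\alpha(C_1')^2=(C_1')^2=-2$ and $\alpha(C_1')\cdot K_Y = C_1'\cdot K_Y=2$ (using $\alpha(K_Y)=K_Y$, consistent with $\alpha$ swapping $E_3'\leftrightarrow R_2$). Next, using the prescribed action $E_1'\leftrightarrow R_3$, $E_2'\leftrightarrow R_1$, $E_3'\leftrightarrow R_2$ together with Table \ref{intY}, I would compute $\alpha(C_1')\cdot D$ for $D\in\{E_1',E_2',E_3',R_1,R_2,R_3\}$ and check that the resulting vector $(2,0,1,0,1,2)$ coincides with the corresponding row for $C_1'$ itself. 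Finally, since by Proposition \ref{Y}(6) the curve $C_1'$ is disjoint from all twelve curves $R_{ij}$ and $\alpha$ merely permutes the $R_{ij}$, I get $\alpha(C_1')\cdot R_{ij}=C_1'\cdot\alpha(R_{ij})=0=C_1'\cdot R_{ij}$ as well. Hence $\alpha(C_1')-C_1'$ is orthogonal to all seventeen curves $E_1',E_3',R_1,R_2,R_3$ and $R_{ij}$ (and to $E_2'$, which lies in their span via $E_1'+E_2'+R_1+R_3\equiv 2E_3'+2R_2$).

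By Proposition \ref{Y}(8) these seventeen curves together with $C_1'$ form a $\bbQ$-basis of $\NS(Y)\otimes\bbQ$, so they span a subspace $V$ of dimension $17$; as the intersection form is nondegenerate (Hodge index theorem), $V^\perp$ is $1$-dimensional and $\alpha(C_1')=C_1'+\lambda w$ for a fixed generator $w$ of $V^\perp$ and some $\lambda\in\bbQ$. Solving the five linear conditions $w\cdot E_1'=w\cdot E_3'=w\cdot R_1=w\cdot R_2=w\cdot R_3=0$ inside the span of $\{E_1',E_3',R_1,R_2,R_3,C_1'\}$ produces $w=-E_1'+3E_3'+3R_2-R_3-2C_1'$, for which Table \ref{intY} gives $w\cdot C_1'=6$ and $w^2=-12$. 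Imposing the forced value $\alpha(C_1')^2=-2$ then yields $-2+12\lambda-12\lambda^2=-2$, i.e. $\lambda(1-\lambda)=0$, so $\lambda=0$ or $\lambda=1$. These two values give exactly $\alpha(C_1')=C_1'$ and $\alpha(C_1')=-R_3-E_1'+3(E_3'+R_2)-C_1'$, as claimed.

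The main obstacle is the bookkeeping in the middle step: one must be sure that the seventeen curves really do span a codimension-one subspace (which rests on the basis statement of Proposition \ref{Y}(8)) and that their orthogonal complement is correctly identified, since an error in $w$ would propagate through the final quadratic equation. Everything else is a finite intersection-number computation from Table \ref{intY}; the only genuinely nonlinear input is the constraint $\alpha(C_1')^2=-2$, which is what produces precisely two candidate classes rather than a whole line of them.
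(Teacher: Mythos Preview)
Your proof is correct and follows essentially the same strategy as the paper's: expand $\alpha(C_1')$ in the $\bbQ$-basis of $\NS(Y)$, use the known action of $\alpha$ on $E_i',R_j,R_{kl}$ to determine most coefficients via intersection numbers, and finish with the quadratic constraint $\alpha(C_1')^2=-2$. The only organizational difference is that the paper carries along an undetermined piece $\Sigma$ supported on $\bigcup R_{ij}$ and kills it at the end via the sign of $\Sigma^2$, whereas you eliminate the $R_{ij}$-contribution at the outset by observing directly that $\alpha(C_1')\cdot R_{ij}=C_1'\cdot\alpha(R_{ij})=0$; your parameter $\lambda$ is simply the paper's $-x$.
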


 \begin{proof}
 Write $\alpha(C_1)= xE_1'+yE_3'+a_1R_1+a_2R_2+a_3R_3+bC_1'+\Sigma$ with rational coefficients,  where $\Sigma$ is supported on $\cup R_{ij}$.
 Intersecting with the 6 curves $E_i', R_j$, we get five independent equations, hence the reduced form
 $$\alpha(C_1)= x(R_3+E_1')-3x(R_2+E_3')+(1+2x)C_1'+\Sigma.$$
 From  $\alpha(C_1')^2=-2$, we get $$12x^2+12x=\Sigma^2$$
 The right hand side is non-positive and the left is non-negative, so $\Sigma=0$ and $x=0$ or $-1$.
  \end{proof}

\begin{lemma} The second possibility in Lemma \ref{twoC} cannot occur.
\end{lemma}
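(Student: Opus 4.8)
The plan is to show first that the second alternative in Lemma \ref{twoC} is nothing but the statement that $\alpha$ interchanges the two elliptic curves $C_1'$ and $C_2'$, and then to rule this out by comparing the action of $\alpha^*$ on cohomology in the two cases, since the intersection data alone cannot see the difference. For the reduction, note that the third linear equivalence in Proposition \ref{Y}(9) reads $E_1'+C_1'+C_2'+R_3\equiv 3(E_3'+R_2)=3K_Y$, so the divisor $-R_3-E_1'+3(E_3'+R_2)-C_1'$ of Lemma \ref{twoC} is numerically, hence (since $Y$ is simply connected) linearly, equivalent to $C_2'$. As $C_2'$ is an irreducible $(-2)$-curve, its class is represented by the single effective divisor $C_2'$, while $\alpha(C_1')$ is an irreducible curve; thus the second alternative says exactly $\alpha(C_1')=C_2'$, i.e.\ $\alpha$ exchanges $C_1'$ and $C_2'$ (and then, applying $\alpha$ to $C_1'+C_3'\equiv 4K_Y\equiv C_2'+C_4'$ and using $\alpha(K_Y)=K_Y$, also exchanges $C_3'$ and $C_4'$). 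The point is that this exchange is an isometry of the intersection form: the rows of $C_1'$ and $C_2'$ (resp.\ $C_3'$ and $C_4'$) in Table \ref{intY} differ only by that very transposition, so no intersection number distinguishes $\alpha(C_1')=C_1'$ from $\alpha(C_1')=C_2'$.

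I would therefore pass to the trace of $\alpha^*$ on $\NS(Y)\otimes\QQ$, computed in the generating set of Proposition \ref{Y}(8), namely $E_1',E_3',R_1,R_2,R_3,C_1'$ together with the twelve curves $R_{ij}$. Using the known action $E_1'\leftrightarrow R_3$, $E_2'\leftrightarrow R_1$, $E_3'\leftrightarrow R_2$, the relation $E_2'\equiv 2K_Y-E_1'-R_1-R_3$, and the pairwise exchange of the six $A_2$-configurations, the diagonal entries are: $0$ for each of the blocks $E_1'\leftrightarrow R_3$ and $E_3'\leftrightarrow R_2$ and for the twelve $R_{ij}$; $-1$ for $R_1$, since the $R_1$-coefficient of $\alpha(R_1)=E_2'=2E_3'+2R_2-E_1'-R_1-R_3$ is $-1$; and, for $C_1'$, the coefficient $+1$ if $\alpha(C_1')=C_1'$ but $-1$ if $\alpha(C_1')=C_2'=-E_1'+3E_3'+3R_2-R_3-C_1'$. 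Hence $\operatorname{tr}(\alpha^*\mid \NS(Y))=0$ in the fixing case and $=-2$ in the exchanging case.

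To convert this into a contradiction I would feed it into the Lefschetz fixed-point formulas. Because the Picard number of $Y$ is $18=h^{1,1}(Y)$, the orthogonal complement of $\NS(Y)$ in $H^2(Y,\QQ)$ has rank $2$ and carries $H^{2,0}\oplus H^{0,2}$; since $\alpha$ stabilises the unique effective canonical divisor $E_3'+R_2$, it acts on $H^{2,0}(Y)$ by a sign $\epsilon=\pm1$, so $\operatorname{tr}(\alpha^*\mid H^2)=\operatorname{tr}(\alpha^*\mid \NS(Y))+2\epsilon$ and the topological Lefschetz number is $L(\alpha)=2+\operatorname{tr}(\alpha^*\mid\NS(Y))+2\epsilon$, while the holomorphic one is $1+\epsilon$. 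Since $\alpha$ is a holomorphic involution, $\Fix(\alpha)$ is a disjoint union of isolated points and smooth curves and $L(\alpha)=\chi_{top}(\Fix(\alpha))$. The exchanging case gives $L(\alpha)=2\epsilon$, which has to be matched against the fixed-point data constrained by the holomorphic Lefschetz number $1+\epsilon$; once one knows $\epsilon=-1$ — equivalently that $\alpha$ is fixed-point free, so $L(\alpha)=0$ and $1+\epsilon=0$ — the balance $2+\operatorname{tr}(\alpha^*\mid\NS(Y))+2\epsilon=0$ forces $\operatorname{tr}(\alpha^*\mid\NS(Y))=0$, which rules out $\alpha(C_1')=C_2'$ and leaves only $\alpha(C_1')=C_1'$.

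The delicate step is the last one: determining the sign $\epsilon$, or equivalently the fixed locus of $\alpha$, without circularity, since every purely intersection-theoretic invariant is symmetric under the exchange and genuinely does not detect it. The cleanest route I foresee is to establish fixed-point freeness directly — either from Borisov's explicit model or from a local analysis of $\alpha$ along the exchanged pairs $E_3'\cap R_2$, $E_1'\cap R_3$, $E_2'\cap R_1$ and the swapped $A_2$-configurations — which pins down $\epsilon=-1$ and $L(\alpha)=0$; the trace computation of the second paragraph then excludes the exchanging case at once. I expect this verification of fixed-point freeness (equivalently, the computation of $\epsilon$) to be the main obstacle, with the reduction and the trace computation being the routine parts.
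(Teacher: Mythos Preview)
Your reduction of the second alternative to $\alpha(C_1')=C_2'$ and your trace computation $\operatorname{tr}(\alpha^*\mid\NS(Y))=-2$ are both correct and match the paper. The gap is exactly where you locate it yourself: you never determine the sign $\epsilon$. You propose to import fixed-point freeness of $\alpha$ from Borisov's explicit model or a local analysis, but this is circular in the paper's architecture: Proposition~\ref{Z}(2) (that $\alpha$ is fixed-point free) is what the section is proving, and the present lemma is a step toward it. Moreover, your equivalence ``$\epsilon=-1\Leftrightarrow\alpha$ fixed-point free'' only goes one way; with trace $-2$ and $\epsilon=-1$ one gets $L(\alpha)=-2$, not $0$, so even granting $\epsilon=-1$ you would not obtain a contradiction without further work.

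The paper avoids this entirely by treating both possibilities for $\epsilon$ on their own terms, with two genuinely different arguments. If $\epsilon=-1$ (so $p_g(Z)=0$), the topological Lefschetz gives $e(Y^\alpha)=-2$; writing the fixed locus as $2m$ isolated points plus curves, Hurwitz yields $e(Z)=2m+10$ and $K_Z^2=2-2m$, so $\NS(Z)=H^2(Z,\bbZ)$ is unimodular of rank $2m+8$. But $Z$ then contains $2m$ $(-2)$-curves, three $A_2$-configurations, and the images $\bar R_1,\bar R_3$ with $\bar R_1^2=\bar R_3^2=-1$, $\bar R_1\bar R_3=3$; these $2m+8$ classes have intersection determinant $2^{2m}\cdot 3^3\cdot 8$, not a perfect square, contradicting unimodularity. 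If $\epsilon=+1$ (so $p_g(Z)=1$), holomorphic Lefschetz gives $\sum A_i^2=6$ and $\sum K_YA_i=2m-8$ for the fixed curves $A_i$; since any $\alpha$-invariant divisor disjoint from the $A_2$-configurations lies in the span of $E_1'+R_3$ and $E_3'+R_2$ (here the hypothesis $\alpha(C_1')\neq C_1'$ is used, so $C_1'$ is not needed), one writes $A=x(E_1'+R_3)+y(E_3'+R_2)$ and is led to the Diophantine equation $2x^2=(m-4)^2-3$, which has no integer solution. These two case analyses are the missing ideas; once you see that each value of $\epsilon$ can be excluded independently, no appeal to fixed-point freeness is needed.
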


 \begin{proof} Suppose that $\alpha(C_1')=-R_3-E_1'+3(E_3'+R_2)-C_1'.$\\ Then $\Tr\alpha|\NS(Y)=-2.$

 \bigskip
{\bf Case 1.} $\alpha=-1$ on $H^2(\calO_Y).$\\
In this case, $p_g(Z)=q(Z)=0$.
By the  topological Lefschetz fixed formula,
$$e(Y^{\alpha})=2+\Tr\alpha|H^2(Y,\bbZ)=2+\Tr\alpha|\NS(Y)+2\Tr\alpha|H^2(\calO_Y)=-2.$$ Let the fixed locus $Y^{\alpha}$ consist of $2m$ points $P_1,..., P_{2m}$ and  curves $A_1,..., A_t$.
Then $e(Y^{\alpha})= 2m+ \sum( 2-2g(A_i))=-2$, and
by Hurwitz $e(Z)=2m+10$ and $K_Z^2=2-2m.$
Note that $H^2(Z, \bbZ)=\NS(Z)$ is unimodular of rank $2m+8$.
On the other hand, $Z$ contains $2m$ $(-2)$-curves, three  $A_2$-configurations, and two curves $\bar{R_1}, \bar{R_3}$ with
$$\bar{R_1}^2=\bar{R_3}^2=-1, \,\,\,\bar{R_1}\bar{R_3}=3.$$
Thus $Z$ contains $2m+8$ curves whose intersection matrix has $|\det|=2^{2m}.3^3.8$, not a square, a contradiction!

 \bigskip
{\bf Case 2.} $\alpha=1$ on $H^2(\calO_Y).$\\
In this case, $p_g(Z)=1, q(Z)=0$.
By the  topological Lefschetz, $e(Y^{\alpha})=2.$ Let $Y^{\alpha}$ consist of $2m$ points $P_1,..., P_{2m}$ and  curves $A_1,..., A_t$.
Then $e(Y^{\alpha})= 2m+ \sum( 2-2g(A_i))=2$.
By the holomorphic Lefschetz fixed point formula (cf. \cite{K12}),
$$1-0+1=2m/4+ \sum\Big( (1-g(A_i))/2+A_i^2/4\Big)=e(Y^{\alpha})/4+\sum A_i^2/4,$$
so
$$\sum A_i^2=6,\,\,\,\sum K_YA_i=2m-8.$$
For surfaces with $p_g>0$, $|\det\NS|$ may not be a square.
We argue in a different way.
Every $\alpha$-invariant divisor such as $A=\sum A_i$, if never intersects the 6 $A_2$-configurations, can be written as
$$A=x(E_1'+R_3)+y(E_3'+R_2)$$
for some rational numbers $x, y$. In our case
$$6=A^2=-2x^2+2y^2+4xy=2(x+y)^2-4x^2,$$ $$2m-8=K_YA=2x+2y.$$ Eliminating $y$, we get $$2x^2=(m-4)^2-3.$$
Since $m$ is an integer, so is $x$, but then it is elementary to check that  this Diophantine equation has no integer solution.
\end{proof}

This, together with the linear equivalences from Proposition \ref{Y}, implies the first assertion of Proposition \ref{Z}.

\medskip
Now  since $\alpha(C_1')=C_1'$, $$\Tr\alpha|\NS(Y)=0.$$  We will prove the last two assertions of Proposition \ref{Z}.

 \bigskip
{\bf Case I.} $\alpha=-1$ on $H^2(\calO_Y).$\\
In this case, $p_g(Z)=q(Z)=0$.
By the  topological Lefschetz fixed formula,
$$e(Y^{\alpha})=2+\Tr\alpha|H^2(Y,\bbZ)=2+\Tr\alpha|\NS(Y)+2\Tr\alpha|H^2(\calO_Y)=0.$$ Let the fixed locus $Y^{\alpha}$ consist of $2m$ points $P_1,..., P_{2m}$ and  curves $A_1,..., A_t$.
Then $$e(Y^{\alpha})= 2m+ \sum( 2-2g(A_i))=0$$ and
by Hurwitz, $$e(Z)=2m+11,\,\,\,K_Z^2=1-2m.$$
By the holomorphic Lefschetz fixed point formula (cf. \cite{K12}),
$$1-0-1=2m/4+ \sum\Big( (1-g(A_i))/2+A_i^2/4\Big)=e(Y^{\alpha})/4+\sum A_i^2/4,$$
so
$$\sum A_i^2=0,\,\,\,\sum K_YA_i=2m.$$
Every $\alpha$-invariant divisor such as $A=\sum A_i$, if never intersects the 6 $A_2$-configurations, can be written as
$$A=x(E_1'+R_3)+y(E_3'+R_2)+bC_1'$$
for some rational numbers $x, y, b$. In our case
$$0=A^2=-2x^2+2y^2-2b^2+4xy+8bx+4by.$$
Note that $E_3'A=\alpha(E_3')\alpha(A)=R_2A$, $E_2'A=R_1A$ and $E_1'A=R_3A.$
Since $ (E_1'+R_3)+(E_2'+R_1)\equiv 2E_3'+2R_2$, these imply that
$$E_1'A+E_2'A=2E_3'A.$$
By Lemma \ref{isol} the intersection number $E_i'A$ is an even integer not exceeding $E_i'\alpha(E_i')$.
From these, we infer that
$$E_1'A=E_2'A=E_3'A=0\,\,\,{\rm or}\,\,\,E_1'A=E_2'A=E_3'A=2.$$
In the latter case, $-x+y+2b=3x+y=x+y+b=2$
which together with the quadratic equation has no rational solution.
In the former case, $-x+y+2b=3x+y=x+y+b=0$
which together with the quadratic equation has one solution $x=y=b=0$. This implies that $A=0$, hence $Y^{\alpha}=\emptyset$.
This completes the proof of Proposition \ref{Z} in this case.

 \bigskip
{\bf Case II.} $\alpha=1$ on $H^2(\calO_Y).$\\
By the  topological Lefschetz, $$e(Y^{\alpha})=4.$$ Let $Y^{\alpha}$ consist of $2m$ points $P_1,..., P_{2m}$ and  curves $A_1,..., A_t$.
Then $e(Y^{\alpha})= 2m+ \sum( 2-2g(A_i))=4$.
By the holomorphic Lefschetz fixed point formula (cf. \cite{K12}),
$$1-0+1=2m/4+ \sum\Big( (1-g(A_i))/2+A_i^2/4\Big)=e(Y^{\alpha})/4+\sum A_i^2/4,$$
so
$$\sum A_i^2=4,\,\,\,\sum K_YA_i=2m-8.$$
As in the previous case, $A=\sum A_i=x(E_1'+R_3)+y(E_3'+R_2)+bC_1'$
for some rational numbers $x, y, b$ and
$$4=A^2=-2x^2+2y^2-2b^2+4xy+8bx+4by.$$
As in the previous case,
$E_1'A=E_2'A=E_3'A=0$  or $2$, thus
$$-x+y+2b=3x+y=x+y+b=0$$ or  $$-x+y+2b=3x+y=x+y+b=2.$$
Either together with the quadratic equation has no rational solution.

\medskip
The following Lemma completes the proof of Proposition \ref{Z}.

\begin{lemma}\label{isol} Let $\alpha$ be an involution on a smooth surface $V$.  Let $D$ be an irreducible  curve on $V$ such that $\alpha(D)\neq D$. If $P\in \alpha(D)\cap D$ is an isolated fixed point of $\alpha$, then every branch $D'$ of $D$ at $P$ is tangent to $\alpha(D')$. In particular, if $P\in \alpha(D)\cap D$ is a transversal intersection point, then either $P\neq \alpha(P)$ or $P$ lies on a point-wise fixed curve of $\alpha$.
 \end{lemma}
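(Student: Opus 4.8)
The plan is to work entirely locally and analytically near $P$, reducing the whole statement to a fact about the linear part of $\alpha$ at $P$. First I would note that in both assertions the relevant case is $P=\alpha(P)$ (when $P$ is an isolated fixed point, or in the contrapositive of the second claim), so I may assume $\alpha(P)=P$ and linearize the action of the order-two group $\langle\alpha\rangle$ at $P$. Writing $L:=d\alpha_P$ for the differential and choosing local coordinates centred at $P$ identifying a neighbourhood with a piece of $T_PV\cong\CC^2$, the averaged map $h:=\tfrac{1}{2}(\id+L\circ\alpha)$ satisfies $dh_P=\tfrac{1}{2}(\id+L\circ L)=\id$ and $h\circ\alpha=\tfrac{1}{2}(\alpha+L)=L\circ h$. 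Thus $h$ is a local biholomorphism conjugating $\alpha$ to its linear part $L$, and the problem is reduced to understanding the linear involution $L$.

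Next I would classify $L$. Being an involution, $L$ is diagonalizable with eigenvalues in $\{+1,-1\}$, giving three cases. If both eigenvalues are $+1$, then $L=\id$, so $\alpha=\id$ in a neighbourhood of $P$; as $P\in D$ and $D$ is irreducible, this forces $\alpha(D)=D$, contrary to hypothesis. If the eigenvalues are $+1$ and $-1$, then the $(+1)$-eigenspace exponentiates (in the linearizing coordinates) to a smooth curve of fixed points through $P$, so $P$ lies on a point-wise fixed curve and in particular is not isolated in $V^\alpha$. Hence, exactly when $P$ is an isolated fixed point, one must have $L=-\id$.

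The geometric conclusion is then immediate. Since $-\id$ acts trivially on the projectivized tangent space $\bbP(T_PV)$, it fixes every tangent direction at $P$. For a branch $D'$ of $D$ at $P$, the image $\alpha(D')$ is a branch of $\alpha(D)$ at $\alpha(P)=P$, and its tangent line is the image under $L$ of the tangent line of $D'$; since $L=-\id$ preserves that line, $D'$ and $\alpha(D')$ share a tangent, proving the first assertion. For the ``in particular'' clause I would argue by contraposition: if $P$ were a transversal intersection point of $D$ and $\alpha(D)$ with $P=\alpha(P)$ and $P$ \emph{not} on a point-wise fixed curve, then by the trichotomy $P$ is an isolated fixed point, so the first part forces the corresponding branches of $D$ and $\alpha(D)$ to be tangent, contradicting transversality. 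Hence a transversal fixed intersection point must have $P\neq\alpha(P)$ or lie on a point-wise fixed curve.

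The main obstacle is not computational but a matter of setting up the local picture cleanly: one must justify the linearization (Cartan's averaging lemma, as above), and must interpret ``branch'', ``tangent line'' and ``transversal'' analytically, so that each branch $D'$ is a local analytic component of $D$ at $P$ carrying a well-defined tangent line from its tangent cone, making the assertion ``$d\alpha_P$ sends the tangent line of $D'$ to that of $\alpha(D')$'' rigorous even when $D$ is singular at $P$. Once the eigenvalue trichotomy for $L$ is correctly matched to the isolated-versus-fixed-curve dichotomy for $V^\alpha$, the tangency conclusion follows with no further effort.
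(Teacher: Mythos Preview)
Your argument is correct and follows the same route as the paper: linearize $\alpha$ at the fixed point, observe that at an isolated fixed point the linear part must be $-\id$ (equivalently $\alpha(x,y)=(-x,-y)$ in suitable coordinates), and conclude that all tangent directions are preserved. The paper compresses this into two sentences, while you spell out Cartan's averaging trick, the eigenvalue trichotomy, and the contrapositive for the second clause, but the core idea is identical.
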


\begin{proof}
 At an isolated fixed point,  $\alpha(x,y)=(-x,-y)$ in a suitable local coordinates $x, y$. So $\alpha$ preserves all tangential directions.
\end{proof}

\section{Proof of Theorem}

Suppose that the Cartwright-Steger surface $X$ contains a smooth curve $B$ with $K_XB=2, B^2=0$. Then
By Step III, $$B\sim 1/9(2E_3+C_1-C_2).$$
Let $$p:X\to X/\langle\sigma\rangle$$ be the quotient map and
$$\nu: Y\to X/\langle\sigma\rangle$$ be the minimal resolution.

By Lemma \ref{Main2}, $p_*B$ is a smooth curve away from the singular points of $X/\langle\sigma\rangle$,
$$p^*p_*B=B+\sigma(B)+\sigma^2(B).$$ Since $p_*E=3\bar{E}$ for any $\sigma$-invariant curve $E$, we see that
$$p_*B\sim \dfrac{1}{9}p_*(2E_3+C_1-C_2)=\dfrac{1}{3}(2\bar{E_3}+\bar{C_1}-\bar{C_2}),$$
thus
$$\nu^*p_*B\sim \dfrac{1}{3}\nu^*(2\bar{E_3}+\bar{C_1}-\bar{C_2})=\dfrac{1}{3}(2E_3'+C_1'-C_2'+\dfrac{2R_1+8R_2+2R_3}{3}).$$
By Proposition \ref{Z}
$$ \alpha\nu^*p_*B\sim \dfrac{1}{3}(2R_2+C_1'-C_2'+\dfrac{2E_2'+8E_3'+2E_1'}{3}).$$
Then a direct computation using Table \ref{intY} in Proposition \ref{Y} gives
$$(\nu^*p_*B)(\alpha\nu^*p_*B)=\dfrac{4}{3},$$
not an integer, a contradiction.

\section{Further discussion on the surface $Z=Y/\langle \alpha\rangle$}

The images of $E_i'$, $R_j$, $C_k'$, on $Z=Y/\langle \alpha\rangle$ will be denoted by $\bfe_i$, $\bfr_j$, $\bfc_k$ respectively. Then
$$\bfe_1=\bfr_3,\,\,\bfe_2=\bfr_1,\,\,\bfe_3=\bfr_2.$$

\begin{proposition}\label{Z2} \begin{enumerate} \item $K_Z=\bfr_2+t$ for the unique 2-torsion divisor $t$.
\item $Z$ is a minimal surface of general type with $K_Z^2=1$, $p_g(Z)=q(Z)=0$, $\pi_1(Z)=\bbZ/2\bbZ$, $\rk Pic(Z)=b_2(Z)=9$.
\item $\bfr_1$ and $\bfr_3$ are rational curves with one node, arithmetic genus $1$.
\item $\bfr_2$ is a  rational curve with two nodes, arithmetic genus $2$.
\item $\bfc_1$ and  $\bfc_2$ are (smooth) elliptic curves.
\item $\bfc_3$ and $\bfc_4$ are curves with geometric genus $1$, arithmetic genus $6$ and  $5$ nodes.
\item The intersection matrix  of the $7$ curves is given as follows:
\begin{table}[ht]
\centering
  \begin{tabular}{|c|c|c|c|c|c|c|c|}\hline
&$\bfr_1$&$\bfr_2$&$\bfr_3$&$\bfc_1$&$\bfc_2$&$\bfc_3$&$\bfc_4$  \\ \hline\hline
$\bfr_1$&$-1$&$1$&$3$&$0$&$0$&$4$&$4$ \\
$\bfr_2$&$1$&$1$&$1$&$1$&$1$&$3$&$3$ \\
$\bfr_3$&$3$&$1$&$-1$&$2$&$2$&$2$&$2$ \\
$\bfc_1$&$0$&$1$&$2$&$-1$&$2$&$5$&$2$\\
$\bfc_2$&$0$&$1$&$2$&$2$&$-1$&$2$&$5$\\
$\bfc_3$&$4$&$3$&$2$&$5$&$2$&$7$&$10$\\
$\bfc_4$&$4$&$3$&$2$&$2$&$5$&$10$&$7$
\\\hline \hline
\end{tabular}
\caption{}\label{intZ}
\end{table}
\item The three  $A_2$-configurations $\bfr_{i1}\--\bfr_{i2}$ are disjoint from the $7$ curves $\bfr_i$, $\bfc_j$.
\item $\Pic(Z)=\NS(Z)=H^2(Z,\bbZ)$ is generated up to finite index by the $9$ curves $$\bfr_1, \bfr_2, \bfc_1, \bfr_{ij},$$ whose intersection matrix has determinant $3^4$.
\item There are many numerical equivalences on $Z$, e.g.,
$$ \bfr_1+\bfr_3\sim 2\bfr_2\equiv 2K_Z,$$
$$ \bfc_1+\bfc_3\sim \bfc_2+\bfc_4\sim 4\bfr_2\equiv 4K_Z,$$
$$ \bfc_1+\bfc_2+\bfr_3\sim 3\bfr_2\sim 3K_Z,$$
$$ \bfr_1+\bfr_2\sim \bfc_1+\bfc_2.$$
\end{enumerate}
\end{proposition}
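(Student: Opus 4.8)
The proof rests on the fact, established in Proposition \ref{Z}, that the Borisov involution $\alpha$ is fixed point free, so that the quotient map $\pi:Y\to Z$ is an \emph{\'etale} double cover. The strategy is to read off every assertion from the corresponding datum on $Y$ recorded in Proposition \ref{Y}, transporting it through $\pi$; I would settle the numerical invariants first, then the canonical class, then the local structure of the seven image curves, and finally the intersection form and the lattice-theoretic claims. For the invariants in (2): since $\pi_1(Z)=\bbZ/2\bbZ$ is finite we have $b_1(Z)=0$, hence $q(Z)=0$, and together with $p_g(Z)=0$ from Proposition \ref{Z} this gives $\chi(\OOO_Z)=1$. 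Because $\pi$ is \'etale of degree $2$, Euler characteristics multiply, so $e(Z)=\tfrac12 e(Y)=\tfrac12\bigl(12\chi(\OOO_Y)-K_Y^2\bigr)=11$ and thus $b_2(Z)=e(Z)-2=9$; as $p_g(Z)=q(Z)=0$ the Picard number equals $b_2(Z)$, giving $\rk\Pic(Z)=9$. The torsion subgroup $\bbZ/2\bbZ$ of $\pi_1(Z)$ supplies the unique nonzero $2$-torsion class $t$.

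The canonical class (1) is the first delicate point. Since $\alpha$ interchanges $E_3'$ and $R_2$, these curves share the image $\bfr_2=\bfe_3$ and $\pi^*\bfr_2=E_3'+R_2=K_Y$ by Proposition \ref{Y}(1); on the other hand $\pi^*K_Z=K_Y$ because $\pi$ is \'etale. The double cover corresponds to the $2$-torsion class $t$, so $\ker\pi^*=\{0,t\}$ on $\Pic$, and hence $K_Z-\bfr_2\in\{0,t\}$. Were $K_Z\sim\bfr_2$, then $\bfr_2$ would be an effective canonical divisor and $p_g(Z)\ge 1$, contradicting $p_g(Z)=0$; therefore $K_Z=\bfr_2+t$.

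The curve structures (3)--(6) carry the bulk of the geometry, and here fixed point freeness is indispensable. I would split the curves into the three $\alpha$-swapped pairs $E_1'\leftrightarrow R_3$, $E_2'\leftrightarrow R_1$, $E_3'\leftrightarrow R_2$, and the four $\alpha$-invariant curves $C_1',\dots,C_4'$ (invariance by Proposition \ref{Z}(1)). For a swapped pair $\pi$ maps each member birationally onto the common image, so $\bfr_1,\bfr_2,\bfr_3$ are the images of the rational curves $R_1,E_3',R_3$; the nodes arise precisely at the points of intersection of the two members, which $\alpha$ must pair into two-element orbits since it has no fixed point. As Table \ref{intY} gives $E_2'\cdot R_1=E_1'\cdot R_3=2$ and $E_3'\cdot R_2=4$, this produces one node on each of $\bfr_1,\bfr_3$ and two nodes on $\bfr_2$, i.e. arithmetic genera $1,2,1$. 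For an invariant curve, $\alpha$ restricts to a fixed point free involution which lifts to a fixed point free involution of the normalization; hence the normalization of the image has the same geometric genus, while the \'etale property guarantees that each singularity type is preserved and that the nodes collapse in pairs. Thus the smooth elliptic $C_1',C_2'$ give smooth elliptic $\bfc_1,\bfc_2$, and $C_3',C_4'$ (geometric genus $1$, arithmetic genus $11$, so $10$ nodes) give $\bfc_3,\bfc_4$ of geometric genus $1$ with the $10$ nodes collapsing to $5$, i.e. arithmetic genus $6$.

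The remaining items are computational and follow the same pattern. The entries of Table \ref{intZ} in (7) come from the projection formula $\bar A\cdot\bar B=\tfrac12(\pi^*\bar A)\cdot(\pi^*\bar B)$ together with $\pi^*\bfr_1=R_1+E_2'$, $\pi^*\bfr_2=R_2+E_3'$, $\pi^*\bfr_3=R_3+E_1'$ and $\pi^*\bfc_j=C_j'$, reducing everything to Table \ref{intY}. The disjointness in (8) is inherited from Proposition \ref{Y}(6), since $\pi$ sends disjoint $\alpha$-invariant families to disjoint families. For (9), $q(Z)=0$ gives $\Pic(Z)=\NS(Z)$ and $p_g(Z)=q(Z)=0$ gives $\NS(Z)=H^2(Z,\bbZ)$; by (8) the intersection matrix of the nine curves is block diagonal, with a $3\times 3$ block of determinant $3$ and three $A_2$-blocks of determinant $3$, so its determinant is $3^4=9^2$ and the nine curves span a sublattice of index $9$ in the unimodular lattice $H^2(Z,\bbZ)/\mathrm{tors}$. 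Finally each equivalence in (10) is checked by pulling it back: applying $\pi^*$ turns it into one of the linear equivalences of Proposition \ref{Y}(9), and injectivity of $\pi^*$ on $\NS\otimes\bbQ$ returns the relation on $Z$. The main obstacle is the local analysis underlying (3)--(6): one must use that the curves upstairs have only nodes, and invoke fixed point freeness both to pair the nodes with none left fixed and, via the \'etale property, to be sure that no new singularities appear downstairs.
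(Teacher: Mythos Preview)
The paper states Proposition~\ref{Z2} without proof; it simply records the data for future reference after establishing in Proposition~\ref{Z} that $\alpha$ is fixed point free. Your argument is therefore not being compared to an existing proof but stands on its own, and it is essentially correct and follows the natural route: pull everything through the \'etale double cover $\pi:Y\to Z$ and read off the answers from Proposition~\ref{Y}.

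Two small points deserve tightening. First, your sentence ``hence the normalization of the image has the same geometric genus'' is not a general fact: for a free involution on a smooth curve of genus $g$ the quotient has genus $(g+1)/2$, and this equals $g$ only when $g=1$. It happens that every $\alpha$-invariant $C_j'$ has geometric genus~$1$, so your conclusion is right, but you should say so explicitly rather than phrase it as a general principle. Second, for the swapped pairs you need to know that the intersections $E_i'\cap R_j$ are transversal in order to conclude that the images acquire only ordinary nodes. This is true, but it comes from the construction: $R_j$ is the exceptional $(-3)$-curve over $\bar O_j$, and $E_i'$ meets it in exactly as many reduced points as $E_i$ has transversal branches at $O_j$ (the multiplicities in Table~\ref{multEC}); the paper records early on that $\pi^{-1}(D_B)$ has transversal branches at each $O_j$. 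With that input, fixed point freeness forces the intersection points to come in $\alpha$-pairs, giving the node counts you state. The analogous issue for (6)---that the $10$ singular points of $C_3'$ (resp.\ $C_4'$) are nodes---you have already flagged; it is again a consequence of the transversality of the branches of $\pi^{-1}(D_A)$ at the $O_i$ together with the way the $1/3(1,1)$ singularities are resolved.
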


\bibliographystyle{plain}

 \end{document}